\documentclass[11pt, oneside,reqno]{amsart}
\usepackage{geometry}                
\geometry{letterpaper}                   

\usepackage{graphicx}
\usepackage{subfig}
\usepackage{amssymb}
\usepackage{epstopdf}
\usepackage{mathrsfs,amssymb}
\DeclareGraphicsRule{.tif}{png}{.png}{`convert #1 `dirname #1`/`basename #1 .tif`.png}
\usepackage{graphicx}
\usepackage{amscd}
\usepackage{amsmath}
\usepackage{amssymb}
\usepackage{amsthm}
\usepackage{amsfonts}
\usepackage{fancyhdr}
\usepackage{amsxtra}
\usepackage{tikz}
\usepackage{setspace}
\usepackage{enumitem}
\usepackage{amsrefs}
\onehalfspacing
\pdfpagewidth 8.5in \pdfpageheight 11in \setlength\topmargin{0in}
\setlength\headheight{0in} \setlength\textheight{8.8in}
\setlength\textwidth{6.5in} \setlength\oddsidemargin{0in}
\setlength\evensidemargin{0in}
\newtheorem{theorem}{Theorem}
\newtheorem{lemma}{Lemma}
\newtheorem{corollary}{Corollary}
\newtheorem{proposition}{Proposition}

\newtheorem{remark}{Remark}

\def\bt{\begin{theorem}}
\def\et{\end{theorem}}
\def\bp{\begin{proposition}}
\def\ep{\end{proposition}}
\def\bl{\begin{lemma}}
\def\el{\end{lemma}}

\numberwithin{equation}{section}
\numberwithin{theorem}{section}
\numberwithin{proposition}{section}
\numberwithin{lemma}{section}
\numberwithin{corollary}{section}
\numberwithin{remark}{section}
\numberwithin{definition}{section}
\numberwithin{claim}{section}

\begin{document}
\setcounter{tocdepth}{1}

\title[On the solutions of Navier-Stokes equations for turbulent channel flows in a particular function class]{On the solutions of Navier-Stokes equations for turbulent channel flows in a particular function class} 

\date{\today}
\author{J. Tian$^{1}$}
\address{$^1$ Department of Mathematics\\
Towson University\\
Towson, MD 21252, USA}
\address{$^2$Department of Mathematics\\
	Texas A\&M University\\ College Statoin, TX, 77843}
\address{$\dagger$ corresponding author}
\author{B. Zhang$^{2}$}
\email[J. Tian$^\dagger$]{jtian@towson.edu}
\email[B. Zhang]{shanby84@gmail.com }
\subjclass[2010]{35Q30,35B41,76D05}

\begin{abstract}
In this paper, we continue the discussion as done in \cite{CTZ15} on turbulent channel flow described by the Navier-Stokes model and the Navier-Stokes-alpha model. We study the non-stationary solutions for the Navier-Stokes equations and Navier-Stokes-$\alpha$ model having particular function forms. In particular, the term of sum of pressure and potential
can be shown to be harmonic in the space variable.
\end{abstract}
\keywords{Navier-Stokes equations, viscous Camassa-Holm equations, NS-$\alpha$ model, Reynolds averaging, channel flow, turbulence}

\maketitle
\section{Introduction}
Turbulence is a fluid regime with the characteristics of being unsteady, irregular, seemingly random and chaotic \cite{gnedin2016star}. It can be used to model the weather, ocean currents, water 
flows in a pipe and
air 
flows around the aircraft wing. Studying turbulent fluid flows involves some of the most difficult and fundamental problems in classical physics, and is also of tremendous practical importance.
The Navier-Stokes equaitons (NSE) have been widely used to describe the motion of turbulent fluid flows. However, solving NSE using the direct numerical simulation method for turbulent flows is extremely difficult, since accurate simulation of turbulent flows should account for the interactions of a wide range of scales which leads to high computational costs.
Turbulence modeling could
provide qualitative and in some cases quantitative measures for
many applications \cite{zhao2004dynamic}. There are several types of turbulence modeling methods, for example Reynolds Average Navier-Stokes (RANS) and Large Eddy Simulation (LES). As done in \cite{A02} and \cite{A04}, we accept that the NS-alpha (also called viscous Camassa-Holm equations or Lagrangian averaged Navier-Stokes equations) is a well-suited mathematical model for the dynamics of appropriately averaged turbulent fluid flows. 
Moreover, in 2003, Guermond et al. mentioned NS-alpha as one of the LES models \cite{JJS03}. The possibility that the NS-alpha is an averaged version of the NSE, first considered in \cite{CFHO98} and \cite{CFHO99}, was entailed by several auspicious facts. Namely, the NS-alpha analogue of the Poiseuille, resp, Hagen, solution in a channel, resp, a pipe, displays both the classical Von K$\acute{a}$rm$\acute{a}$n  and the recent Barenblatt-Chorin laws (\cite{H99}). In addition, the NS-alpha analogue of the Hagen solution, when suitably calibrated, yields good approximations to many experimental data \cite{CFHO99}. Therefore, continuing the study of NS-alpha is extremely useful and important in the aspects of both mathematical theories and down-to-earth applications. 


To understand the connection for fluid flows described by NSE and by NS-alpha, in \cite{CTZ15} we
used a simple Reynolds type averaging. We also restrict our consideration to channel flows having special
function forms prescribed as a function class called $\mathcal{P}$. This function class $\mathcal{P}$ was inspired by the concept of regular part of the weak attractor of the 3D NSE (\cite{CR87}, \cite{CRR10}) as well as by that of the sigma weak attractor introduced in \cite{BFL}. This leads us to consider the solution for the channel flow whose averaged form has both the second and third velocity component to
be zero. This will be our assumption for the discussion done in current work. Starting from there, a physical model for the wall roughness of the channel is subsequently provided to show that the NS-$\alpha$ model occurs naturally as the fluid flows. Moreover, by restricting to consider functions from $\mathcal{P}$, a rational explanation was given to facilitate the understanding of why, as the Reynolds number increases, the fluid becomes in favor of the NS-$\alpha$ model instead of the NSE. The class $\mathcal{P}$ was composed by five assumptions, each assumption plays an unique and important role. In this paper, we conduct a deep study of the properties of solutions in this particular function class. We first try to find the explicit formula of the non-stationary solutions for NSE and NS-$\alpha$. This particular solution has the form which only the first velocity component is nonzero. From there, we can recover the classic Poiseuille flow. Moreover, we proved the symmetric property of the integration form of this Poiseuille flow. Explicit and detailed energy estimate of the velocity field in this class $\mathcal{P}$ is presented and is of use to show the connection between $\mathcal{P}$ and the weak global attractor of the equations. Moreover, for the term of sum of pressure and potential, we also proved that it is actually harmonic in the space variable. Studying the properties of the term of sum of pressure and potential, we find an alternative weaker condition of the last assumption in class $\mathcal{P}$. Therefore, we have found an optimal choice of the class $\mathcal{P}$.

The paper is organized as follows. Section 2 gives elementary results on the Navier-Stokes equations and the Navier-Stokes-$\alpha$ model as well as the definition of the class $\mathcal{P}$. In section 3, we solve the channel flows whose velocity fields have a special form. Section 4 contains the energy estimate for solutions in $\mathcal{P}$. In section 5, we discuss the harmonicity of the term of sum of pressure and potential in the space variable and its consequences. The last section contains some basic inequalities together with their proofs.

\section{Preliminaries}
\subsection{Mathematical backgrounds}
Throughout, we consider an incompressible viscous fluid in an immobile region $\mathcal{O}\subset \mathbb{R}^3$ subjected to a potential body force $F=-\nabla \Phi$, with a time independent potential $\Phi=\Phi(x)\in C^{\infty}(\mathcal{O})$. The velocity field of such flows,
\begin{align}
\label{nse_form}
u=u(x,t)=(u_1(x,t),u_2(x,t),u_3(x,t)), x=(x_1,x_2,x_3)\in \mathcal{O}
\end{align}
satisfies the NSE,
\begin{align}
\label{nse}
\frac{\partial}{\partial t}u+(u\cdot \nabla)u=\nu \Delta u-\nabla P, \hspace{.2 in} \nabla\cdot u=0,
\end{align}
where $P:=p+\Phi$, $t$ denotes the time, $\nu>0$ the kinematic viscosity, and $p=p(x,t)$ the pressure.

The NS-$\alpha$ are 
\begin{align}
\label{che}
\frac{\partial}{\partial t}v+\left(u\cdot\nabla\right)v+\sum_{j=1}^{3}v_j\nabla u_j=\nu \Delta v-\nabla Q, \hspace{.2 in} \nabla \cdot u=0,
\end{align}
where
\begin{align}
\label{vche_nse}
v=(v_1,v_2,v_3)&=(1-\alpha^2\Delta)u \nonumber \\
&=\left((1-\alpha^2\Delta)u_1,(1-\alpha^2\Delta)u_2,  (1-\alpha^2\Delta)u_3\right), 
\end{align}
and $Q$ in (\ref{che}) (like $P$ in (\ref{nse})) may depend on the time $t$.

The following boundary conditions, for both the NSE (\ref{nse}) and the NS-$\alpha$ (\ref{che}),
\begin{align}
\label{no_slip_bc}
u(x,t)=0, \hspace{.1 in}  \text{ for } x\in \partial \mathcal{O}:=\text{boundary of } \mathcal{O},
\end{align}
are imposed, since the fluid we consider is viscous.

One can see that if $\alpha=0$, the NS-$\alpha$ (\ref{che}) reduce to NSE (\ref{nse}), so that (\ref{che}) is also referred as an $\alpha$-model of (\ref{nse}).

In the case of a channel flow, that is, $\mathcal{O}=\mathbb{R}\times\mathbb{R}\times[x_3^{(l)},x_3^{(u)}]$, where $h:=x_3^{(u)}-x_3^{(l)}>0$ is the ``height" of the channel, we recall that a vector of the form
\begin{align}
\label{vel_par_form}
(U(x_3),0,0)
\end{align}
is a $stationary$ (i.e., time independent) solution of the NSE (\ref{nse}) if and only if 
\begin{align}
\label{nse_par_form}
U(x_3)=b\left(1-\frac{(x_3-\frac{x_3^{(u)}+x_3^{(l)}}{2})^2}{(h/2)^2}\right), \hspace{.1 in} x_3\in[x_3^{(l)},x_3^{(u)}],
\end{align}
where $b$ is a constant velocity;
respectively, $(U(x_3),0,0)$ is a $stationary$ (i.e., time independent) solution of the NS-$\alpha$ (\ref{che}) if and only if,
\begin{align}
\label{vche_par_form}
U(x_3)=a_1\left(1-\frac{\cosh\left((x_3-\frac{x_3^{(u)}+x_3^{(l)}}{2})/{\alpha}\right)}{\cosh h/(2\alpha)}\right)+a_2\left(1-\frac{(x_3-\frac{x_3^{(u)}+x_3^{(l)}}{2})^2}{(h/2)^2}\right), 
\end{align}
for $x_3\in[x_3^{(l)},x_3^{(u)}]$,
where $a_1, a_2$ are constant velocities (cf. formula (9.6) in \cite{CFHO99}). Above, $\cosh(x)=\frac{e^{x}+e^{-x}}{2}$ is the hyperbolic cosine function.

To simplify our notation, we will assume that $x_3^{(l)}=0$ and $x_3^{(u)}=h$.
\subsection {The class $\mathcal{P}$}
We first recall the class $\mathcal{P}$ defined in \cite{CTZ15}.

By definition, a function $u(x,t)$ belongs to class $ \mathcal{P}$ if it satisfies $(\bf{A}.1)-(\bf{A}.5)$,

$(\bf{A}.1)$ $u(x,t)\in C^{\infty}(\mathcal{O}\times \mathbb{R})$.

$(\bf{A}.2)$ $u(x,t)$ is periodic in $x_1$ and $x_2$, with periods $\Pi_1$ and $\Pi_2$, respectively; i.e., 
\begin{align}
\label{periodicity_u}
u(x_1+\Pi_1,x_2,x_3,t)=u(x_1,x_2,x_3,t), \hspace{.1 in} u(x_1,x_2+\Pi_2,x_3,t)=u(x_1,x_2,x_3,t).
\end{align}

\begin{remark}
\label{rk_pressure_dp}
We remark that from $(\ref{nse})$ and $(\bf{A}.2)$, it follows,
\begin{align}
\label{pressure_difference}
\left\{\begin{matrix}
P(x_1+\Pi_1,x_2,x_3,t)-P(x_1,x_2,x_3,t)=:p_1(t),\\
P(x_1,x_2+\Pi_2,x_3,t)-P(x_1,x_2,x_3,t)=:p_2(t),
\end{matrix}\right.
\end{align}
\end{remark}

$(\bf{A}.3)$ $u(x,t)$ exists for all $t\in \mathbb{R}$, and has bounded energy per mass, i.e.,
\begin{align}
\label{bdd}
 \int_{0}^{\Pi_1} \int_{0}^{\Pi_2}\int_{0}^{h}u(x,t)\cdot u(x,t) dx<\infty, \forall t\in \mathbb{R}. 
\end{align}

$(\bf{A}.4)$ there exists a constant $ \bar{p}<\infty$ for which,
\begin{align*}
&0<-p_1(t)\leq \bar{p}\\
&|p_2(t)|\leq \bar{p}  
\end{align*}
for all $t \in \mathbb{R}$, where $p_1(t)$ and $p_2(t)$ are defined in (\ref{pressure_difference}).


 $(\bf{A}.5)$ $P=P(x,t)$ is bounded in $x_2$ direction, i.e.,
\begin{align*}
\sup_{x_2 \in \mathbb{R}}P(x_1,x_2,x_3,t)<\infty, \forall x_1,x_3,t\in \mathbb{R}.
\end{align*}

Notice that, as done in \cite{CTZ15}, one can show 
\begin{align*}
p_2(t)\equiv 0.
\end{align*}


As mentioned in the introduction, in the discussion of current paper, we additionally assume that $u_3(x,t)\equiv 0$.

The reality condition on $u$ becomes, when viewed in the Fourier space,
\begin{align}
\label{reality}
\hat{u}^{*}(t;k_1,k_2,k)=\hat{u}(t;-k_1,-k_2,k);
\end{align}
and due to $u_3=0$, the divergence free condition in $(\ref{nse})$ reduces to 
\begin{align}
\label{simp_div_free}
\frac{2\pi k_1}{\Pi_1}\hat{u}_1+\frac{2\pi k_2}{\Pi_2}\hat{u}_2=0.
\end{align}
Using $u_3=0$, we see NSE (\ref{nse}) become
\begin{align}
\label{nse_simple}
\left\{\begin{matrix}
\frac{\partial}{\partial t}u_1+u_1 \frac{\partial}{\partial x_1}u_1+u_2\frac{\partial}{\partial x_2}u_1-\nu (\frac{\partial^2}{\partial x_1^2}+\frac{\partial^2}{\partial x_2^2}+\frac{\partial^2}{\partial x_3^2})u_1=-\frac{\partial}{\partial x_1}P\\ 
\frac{\partial}{\partial t}u_2+u_1 \frac{\partial}{\partial x_1}u_2+u_2\frac{\partial}{\partial x_2}u_2-\nu (\frac{\partial^2}{\partial x_1^2}+\frac{\partial^2}{\partial x_2^2}+\frac{\partial^2}{\partial x_3^2})u_2=-\frac{\partial}{\partial x_2}P\\
0=-\frac{\partial}{\partial x_3}P\\
\frac{\partial}{\partial x_1}u_1+\frac{\partial}{\partial x_2}u_2=0.
\end{matrix}\right.
\end{align}

If one defines the following Reynolds type average for any given scalar function $\phi=\phi(x)$, namely,
\begin{align}
\label{avg_q}
<\phi>(x_3):=\frac{1}{\Pi_1 \Pi_2} \int_{0}^{\Pi_1} \int_{0}^{\Pi_2} \phi dx_2dx_1,
\end{align}

then, one can show

\begin{proposition}[see \cite{CTZ15}]
\label{zero_avg_2nd}
For all $u(x,t)\in \mathcal{P}$, we have
\begin{align}
\label{avg_zero}
<u_2(t)>(x_3)\equiv 0, 
\end{align}
for all $ x_3 \in [0,h], t\in \mathbb{R}$.

Therefore, the averaged velocity field takes the form
\begin{align}
\label{averaged_velocity}
<u(t)>(x_3)=\begin{pmatrix}
<u_1(t)>(x_3)\\
<u_2(t)>(x_3)\\
<u_3(t)>(x_3)
\end{pmatrix}=\begin{pmatrix}
<u_1(t)>(x_3)\\
0\\
0
\end{pmatrix}.
\end{align}
\end{proposition}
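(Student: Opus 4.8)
The plan is to average the second component of the simplified NSE system \eqref{nse_simple} over the periodic cell in the $x_1,x_2$ directions and extract an equation for $<u_2(t)>(x_3)$ alone. First I would apply the averaging operator $<\cdot>$ defined in \eqref{avg_q} to the second equation of \eqref{nse_simple}. The term $<\partial_t u_2> = \partial_t <u_2>$ by smoothness and interchange of integral and derivative (justified by $(\mathbf{A}.1)$ and $(\mathbf{A}.3)$), while the viscous term contributes $-\nu\,\partial_{x_3}^2 <u_2>$ since the $x_1$- and $x_2$-derivative terms integrate to zero over their respective periods by $(\mathbf{A}.2)$. The pressure term $<\partial_{x_2}P>$ vanishes because $P$ is periodic in $x_2$ (this follows from Remark~\ref{rk_pressure_dp} together with $p_2(t)\equiv 0$, which the excerpt records as known). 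The key remaining work is the nonlinear term $<u_1\partial_{x_1}u_2 + u_2\partial_{x_2}u_2>$.

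The main obstacle is showing that this averaged nonlinear term reduces to a total $x_3$-derivative of $<u_2>$ or to something controllable. Here I would use the divergence-free condition $\partial_{x_1}u_1 + \partial_{x_2}u_2 = 0$ from \eqref{nse_simple} to rewrite the nonlinearity in conservation form: $u_1\partial_{x_1}u_2 + u_2\partial_{x_2}u_2 = \partial_{x_1}(u_1 u_2) + \partial_{x_2}(u_2^2)$. Averaging this over the periodic cell kills both terms, since each is an $x_1$- or $x_2$-derivative of a periodic function. Thus the averaged equation collapses to
\begin{align}
\label{avg_u2_pde}
\frac{\partial}{\partial t}<u_2(t)>(x_3) = \nu\,\frac{\partial^2}{\partial x_3^2}<u_2(t)>(x_3),
\end{align}
a one-dimensional heat equation on $[0,h]$ with homogeneous Dirichlet boundary conditions coming from the no-slip condition \eqref{no_slip_bc} (which forces $<u_2(t)>(0) = <u_2(t)>(h) = 0$).

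To conclude $<u_2(t)>(x_3)\equiv 0$ for all $t\in\mathbb{R}$, I would invoke a backward-uniqueness / global-boundedness argument for the heat equation: the energy $E(t) := \frac12\int_0^h |<u_2(t)>(x_3)|^2\,dx_3$ satisfies $\frac{d}{dt}E(t) = -\nu\int_0^h |\partial_{x_3}<u_2>|^2\,dx_3 \le 0$, so $E$ is nonincreasing; combined with the assumption $(\mathbf{A}.3)$ that the energy stays bounded for \emph{all} $t\in\mathbb{R}$ (including $t\to-\infty$), $E$ must be constant, hence $\partial_{x_3}<u_2>\equiv 0$, and the boundary condition then forces $<u_2>\equiv 0$. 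The representation \eqref{averaged_velocity} follows immediately by combining this with the standing assumption $u_3\equiv 0$. The one delicate point to handle carefully is the justification that differentiation under the integral sign and the various boundary evaluations are legitimate — but these are routine given the smoothness in $(\mathbf{A}.1)$ and the periodicity in $(\mathbf{A}.2)$.
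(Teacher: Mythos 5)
Your derivation of the averaged equation is sound and is essentially the route taken in the cited reference \cite{CTZ15} (the present paper gives no proof, only the citation): averaging the second equation of \eqref{nse_simple}, rewriting the nonlinearity in divergence form via $\partial_{x_1}u_1+\partial_{x_2}u_2=0$ so that it averages to zero, and using $p_2(t)\equiv 0$ to kill the pressure term correctly reduces the problem to the one-dimensional heat equation $\partial_t\langle u_2\rangle=\nu\,\partial_{x_3}^2\langle u_2\rangle$ with homogeneous Dirichlet data. Up to that point there is nothing to object to.

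The gap is in your final step. You assert that $E(t):=\tfrac12\int_0^h\langle u_2\rangle^2\,dx_3$ being nonincreasing and bounded for all $t\in\mathbb{R}$ forces $E$ to be constant. That implication is false: a bounded nonincreasing function on $\mathbb{R}$ need not be constant (e.g.\ $t\mapsto \arctan(-t)$), so monotonicity plus two-sided boundedness gives you nothing. What actually closes the argument — and what this paper does in every analogous place, cf.\ \eqref{ineq_for_W} and \eqref{tilde_<U>_sol} — is the Poincar\'e inequality (Lemma \ref{poincare}): since $\langle u_2\rangle$ vanishes at $x_3=0,h$, one has $\int_0^h|\partial_{x_3}\langle u_2\rangle|^2dx_3\ge h^{-2}\int_0^h\langle u_2\rangle^2dx_3$, hence $\frac{d}{dt}E\le -\frac{2\nu}{h^2}E$ and $E(t)\le E(t_0)\,e^{-2\nu(t-t_0)/h^2}$ for all $t_0<t$. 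Because $E(t_0)\le \frac{1}{2\Pi_1\Pi_2}\int_\Omega |u|^2\,d^3x$ is controlled by $(\mathbf{A}.3)$ uniformly as $t_0\to-\infty$ (this is how the paper uses $(\mathbf{A}.3)$ throughout), letting $t_0\to-\infty$ yields $E(t)=0$, i.e.\ $\langle u_2(t)\rangle\equiv 0$. With this substitution your proof is complete; the representation \eqref{averaged_velocity} then follows from the standing assumption $u_3\equiv 0$ as you say.
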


The following kernel representation of the averaged velocity component $<u_1(t)>(x_3)$ is also given in \cite{CTZ15}.

\begin{proposition}
\label{repr_by_kernel}
The following relation holds for $u(x,t)\in \mathcal{P}$
\begin{align}
\label{kernel_repr}
<u_1(t)>(x_3)=\int_{-\infty}^{t} K(x_3,t-\tau)p_1(\tau)d\tau,
\end{align}
where, the kernel function $K(x,t)$ is defined by the series,
\begin{align}
\label{kernel_K}
K(x,t)=\sum_{k=1}^{\infty}\frac{2\left((-1)^k-1\right)}{\Pi_1 k \pi}e^{-\nu(\frac{\pi k}{h})^2t}\sin\frac{\pi k x}{h}.
\end{align}
\end{proposition}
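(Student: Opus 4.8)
The plan is to collapse the reduced system (\ref{nse_simple}) to a one--dimensional linear heat equation for $\langle u_1\rangle$ by averaging its first component over a period cell in $x_1,x_2$, and then to integrate that equation by a Fourier sine expansion in $x_3$.

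First I would apply the averaging operator $\langle\cdot\rangle$ of (\ref{avg_q}) to the first equation of (\ref{nse_simple}). By the periodicity $(\bf{A}.2)$ one has $\langle\partial_{x_1}^2 u_1\rangle=\langle\partial_{x_2}^2 u_1\rangle=0$, whereas $\langle\partial_{x_3}^2 u_1\rangle=\partial_{x_3}^2\langle u_1\rangle$ since the average is taken only in $x_1,x_2$. Using the divergence-free relation $\partial_{x_1}u_1+\partial_{x_2}u_2=0$ I rewrite the convective terms as $u_1\partial_{x_1}u_1+u_2\partial_{x_2}u_1=\partial_{x_1}(u_1^2)+\partial_{x_2}(u_1u_2)$, whose average vanishes again by periodicity. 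For the pressure term, Remark \ref{rk_pressure_dp} gives $\langle\partial_{x_1}P\rangle(x_3,t)=\frac{1}{\Pi_1\Pi_2}\int_0^{\Pi_2}\big(P(\Pi_1,x_2,x_3,t)-P(0,x_2,x_3,t)\big)\,dx_2=\frac{p_1(t)}{\Pi_1}$. Combining these, and reading off the boundary values from the no-slip condition (\ref{no_slip_bc}) at $x_3=0,h$, the function $w(x_3,t):=\langle u_1(t)\rangle(x_3)$ solves
\begin{align*}
\partial_t w-\nu\,\partial_{x_3}^2 w=-\frac{p_1(t)}{\Pi_1},\qquad w(0,t)=w(h,t)=0 .
\end{align*}

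Next I would expand $w(\cdot,t)$ in the orthogonal basis $\{\sin(\pi k x_3/h)\}_{k\ge1}$ of $L^2[0,h]$, writing $w(x_3,t)=\sum_{k\ge1}c_k(t)\sin(\pi k x_3/h)$, and expand the constant forcing by $1=\sum_{k\ge1}\frac{2(1-(-1)^k)}{\pi k}\sin(\pi k x_3/h)$. The equation decouples into the scalar ODEs
\begin{align*}
c_k'(t)+\nu\Big(\frac{\pi k}{h}\Big)^2 c_k(t)=\frac{2\big((-1)^k-1\big)}{\Pi_1\pi k}\,p_1(t),\qquad k\ge1,
\end{align*}
whose solutions are $c_k(t)=C_k e^{-\nu(\pi k/h)^2 t}+\frac{2((-1)^k-1)}{\Pi_1\pi k}\int_{-\infty}^{t}e^{-\nu(\pi k/h)^2(t-\tau)}p_1(\tau)\,d\tau$; the Duhamel integral converges absolutely and, thanks to $(\bf{A}.4)$, is bounded uniformly in $t$ by $\bar p/\big(\nu(\pi k/h)^2\big)$. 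I would then rule out the homogeneous modes, i.e.\ show $C_k=0$: from $c_k(t)=\frac{2}{h}\int_0^h w(x_3,t)\sin(\pi k x_3/h)\,dx_3$, Cauchy--Schwarz and $\langle u_1\rangle^2\le\langle u_1^2\rangle$ give $|c_k(t)|^2\le\frac{2}{h\Pi_1\Pi_2}\int_0^{\Pi_1}\!\int_0^{\Pi_2}\!\int_0^h u\cdot u\,dx$, so by $(\bf{A}.3)$ the coefficient $c_k(t)$ stays bounded as $t\to-\infty$; since the Duhamel part is also bounded, $C_k e^{-\nu(\pi k/h)^2 t}$ must stay bounded as $t\to-\infty$, which forces $C_k=0$. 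Substituting back and interchanging the summation with the integration — legitimate by dominated convergence, comparing against the summable majorant $\sum_k\frac{4}{\Pi_1\pi k}e^{-\nu(\pi k/h)^2(t-\tau)}$ for $t>\tau$ — yields $w(x_3,t)=\int_{-\infty}^{t}K(x_3,t-\tau)p_1(\tau)\,d\tau$ with $K$ exactly as in (\ref{kernel_K}).

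The step I expect to be the main obstacle is the uniqueness claim $C_k=0$: assumption $(\bf{A}.3)$ only guarantees finiteness of the energy at each instant, not a uniform-in-$t$ bound, so making the argument airtight may require first extracting an a priori bound on $\|\langle u_1(t)\rangle\|_{L^2[0,h]}$ as $t\to-\infty$ from the interplay of $(\bf{A}.3)$ and $(\bf{A}.4)$, or else invoking the reasoning already carried out in \cite{CTZ15}. The remaining analytic points — term-by-term differentiation of the sine series and the Fubini/dominated-convergence interchange — are routine in view of the super-exponential decay $e^{-\nu(\pi k/h)^2(t-\tau)}$ of the coefficients when $t>\tau$.
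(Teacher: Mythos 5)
Your proof is correct and follows essentially the same route the paper itself takes: the paper states Proposition~\ref{repr_by_kernel} without proof (citing \cite{CTZ15}), but its own derivation of (\ref{form_U(x_3,t)_p(t)}) in Section~3 is exactly your argument --- reduce to a forced one-dimensional heat equation in $x_3$, expand in $\sin(\pi k x_3/h)$, solve the resulting ODEs by Duhamel, and eliminate the homogeneous modes by sending $t_0\to-\infty$. Regarding the obstacle you flag at the end: the paper reads $(\mathbf{A}.3)$ as supplying a bound on the energy that is uniform in $t$ (this is how it is invoked right after (\ref{ineq_for_W}) and again after (\ref{tilde_<U>_sol})), so your step $C_k=0$ goes through exactly as written and no further a priori estimate is needed.
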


\section{Channel flows with velocity field of a particular form}

As shown in the Proposition \ref{zero_avg_2nd}, the averaged velocity field in the solution of the NSE (\ref{nse}) has a special form, namely, both the second and the third components vanish (see (\ref{averaged_velocity})), thus, it is worth to consider the solutions with this form of the NSE and NS-$\alpha$ in a more general setting. See also (\ref{nse_par_form}) and (\ref{vche_par_form}) for 
time independent solutions of this form.

\subsection{The NSE case}

Consider the NSE (\ref{nse}) with the following form of solution, 
\begin{align}\label{assump_1}
	u=(U(x,t),0,0),
\end{align}
for $0\leq x_3\leq h$, satisfying the assumptions $(\bf{A}.1)$, $(\bf{A}.2)$ and $(\bf{A}.3)$.

\subsubsection{Simple form}
Using the form (\ref{assump_1}) of $u$, the NSE (\ref{nse}) become
\begin{align}\label{nse_spec}
	\left\{\begin{matrix}
		&\frac{\partial U}{\partial t}-\nu \Delta U+\frac{\partial P}{\partial x_1}=0, \\ 
		&\frac{\partial P}{\partial x_2}=0, \\ 
		&\frac{\partial P}{\partial x_3}=0,\\
		&\frac{\partial U}{\partial x_1}=0,
	\end{matrix}\right.
\end{align}
hence $U$ is independent of $x_1$. Using the first and fourth equations in (\ref{nse_spec}), we obtain that $\frac{\partial^2 P}{\partial x_1^2}=0$, which combined with the second and third equations in (\ref{nse_spec}) for $P$, imply that $P$ must be of the form
\begin{align}
	\label{form_of_p}
	P=P(x_1,t)=\tilde{p}_0(t)+x_1\tilde{p}_1(t).
\end{align}

Therefore, $U=U(x_2,x_3,t)$ satisfies
\begin{align}
	\label{equ_U}
	\frac{\partial U}{\partial t}-\nu (\frac{\partial^2}{\partial x_2^2}+\frac{\partial ^2}{\partial x_3^2})U=-\tilde{p}_1(t).
\end{align}


\subsubsection{Solving $(\ref{nse_spec})$}
Based on the periodicity (\ref{periodicity_u}) in $(\bf{A}.2)$, we can expand $U(x_2,x_3,t)$ in Fourier series:
\begin{align}
	\label{expansion_U}
	U(x_2,x_3,t)=\sum_{n=-\infty}^{\infty}\hat{U}(n,x_3,t)e^{ \frac{i 2 \pi n}{\Pi_2} x_2},
\end{align}
where 
\begin{align*}
	\hat{U}(n,x_3,t):=\frac{1}{\Pi_2} \int_{0}^{\Pi_2} U(x_2,x_3,t) e^{-\frac{i 2 \pi n}{\Pi_2} x_2} d x_2,
\end{align*}
in particular
\begin{align*}
	\hat{U}(0,x_3,t)=\frac{1}{\Pi_2} \int_{0}^{\Pi_2} U(x_2,x_3,t) d x_2.
\end{align*}

So the equation (\ref{equ_U}) can be written as
\begin{align}
	\label{eqn_in_fourier coeff}
	\sum_{n=-\infty}^{\infty} \frac{\partial}{\partial t} \hat{U}(n,x_3,t)e^{\frac{i 2 \pi n}{\Pi_2} x_2}-\nu \sum_{n=-\infty}^{\infty} \left [-\hat{U}(n,x_3,t) (\frac{2 \pi n}{\Pi_2})^2+(\frac{{\partial}^2}{\partial {x_3}^2})\hat{U}(n,x_3,t)\right ]e^{\frac{i 2 \pi n}{\Pi_2} x_2}=-\tilde{p}_1 (t),
\end{align}
where (\ref{no_slip_bc}) implies the following boundary condition
\begin{align}
	\label{boundary_u_hat}
	\hat{U}(n,x_3,t)|_{x_3=0,h}=0.
\end{align}

We first consider the case when $n \neq 0$:
(\ref{eqn_in_fourier coeff}) implies
\begin{align}\label{nonzero_n}
	\frac{\partial}{\partial t} \hat{U}(n,x_3,t)+\nu (\frac{2 \pi n}{\Pi_2})^2 \hat{U}(n,x_3,t)- \nu (\frac{{\partial}^2}{\partial {x_3}^2}) \hat{U}(n,x_3,t)=0.
\end{align}
Taking dot product with $\hat{U}(n,x_3,t)$ in equation (\ref{nonzero_n}), and then integrating with respect to $x_3$ from $0$ to $h$, we get:
\begin{align*}
	\frac{1}{2} \frac{d}{d t}\int_{0}^{h} |\hat{U}(n,x_3,t)|^2 d x_3+\nu  (\frac{2 \pi n}{\Pi_2})^2 \int_{0}^{h} |\hat{U}(n,x_3,t)|^2 d x_3- \nu \int_{0}^{h}(\frac{{\partial}^2}{\partial {x_3}^2})\hat{U}(n,x_3,t) \cdot \hat{U}(n,x_3,t) d x_3=0.
\end{align*}
Integration by parts, using the boundary condition (\ref{boundary_u_hat}), results in
\begin{align}\label{nonzero_n_integral}
	\frac{1}{2} \frac{d}{d t}\int_{0}^{h} |\hat{U}(n,x_3,t)|^2 d x_3+\nu  (\frac{2 \pi n}{\Pi_2})^2 \int_{0}^{h} |\hat{U}(n,x_3,t)|^2 d x_3+\nu \int_{0}^{h}\left(\frac{{\partial}}{\partial {x_3}}\hat{U}(n,x_3,t)\right)^2 d x_3=0.
\end{align}
By Poincar$\acute{e}$ inequality (\ref{poincare}), we get from equation (\ref{nonzero_n_integral}) 
\begin{align*}
	\frac{1}{2} \frac{d}{d t}\int_{0}^{h} |\hat{U}(n,x_3,t)|^2 d x_3+\nu  (\frac{2 \pi n}{\Pi_2})^2 \int_{0}^{h} |\hat{U}(n,x_3,t)|^2 d x_3+\nu \frac{1}{h^2}\int_{0}^{h} |\hat{U}(n,x_3,t)|^2 d x_3 \leq 0.
\end{align*}
Denoting $W(n,t):=\int_{0}^{h} |\hat{U}(n,x_3,t)|^2 d x_3$, we have
\begin{align*}
	\frac{1}{2} \frac{d}{d t}W(n,t)+\nu  (\frac{2 \pi n}{\Pi_2})^2 W(n,t)+\nu \frac{1}{h^2}W(n,t) \leq 0,
\end{align*}
which can be integrated to get,
\begin{align}
	\label{ineq_for_W}
	W(n,t) \leq e^{-2(t-t_0)(\nu  (\frac{2 \pi n}{\Pi_2})^2+\frac{\nu}{h^2})} W(n,t_0), \forall t_0<t,
\end{align}
then, because of assumption $(\bf{A}.3)$, $W(n,t_0)$ is bounded for all $t_0 \in \mathbb{R}$, hence by taking $t_0\rightarrow -\infty$ in (\ref{ineq_for_W}), we get $W(n,t)=0$, for all $t \in \mathbb{R}$, and therefore, $\hat{U}(n,x_3,t)=0$, for all $n \neq 0$.

So, we only need to consider the case when $n=0$: in this case (\ref{eqn_in_fourier coeff}) implies
\begin{align}
	\label{zero_n}
	\frac{\partial}{\partial t} \hat{U}(0,x_3,t)- \nu (\frac{{\partial}^2}{\partial {x_3}^2})  \hat{U}(0,x_3,t)=-\tilde{p}_1 (t);
\end{align}
and, the expansion of $U(x_2,x_3,t)$ in (\ref{expansion_U}) becomes
\begin{align}\label{tj3}
	U(x_2,x_3,t)=\hat{U}(0,x_3,t)=\frac{1}{\Pi_2} \int_{0}^{\Pi_2} U(x_2,x_3,t) d x_2,
\end{align}
that is, $U(x_2,x_3,t)=U(x_3,t)$ is independent of $x_2$.

The equation satisfied by $U(x_3,t)$ follows from equation (\ref{equ_U}) (or (\ref{zero_n})) :
\begin{align}
	\label{eqn_for_simp_U}
	\frac{\partial}{\partial t}U(x_3,t)-\nu  \frac{{\partial}^2}{\partial {x_3}^2} U(x_3,t)=-\tilde{p}_1 (t).
\end{align}

Since $U(0,t)=U(h,t)=0$, we can take the Fourier sine expansion for $U(x_3,t)$: $U(x_3,t)=\sum_{k=1}^{\infty}\hat{U}(k,t) \sin(\frac{\pi kx_3}{h} )$, so equation (\ref{eqn_for_simp_U}) gives the following equation for the coefficients $\hat{U}(k,t)$
\begin{align}
	\label{eqn_for_coef_simp_U}
	\frac{\partial}{\partial t}\hat{U}(k,t)+\nu  (\frac{\pi k}{h})^2 \hat{U}(k,t) =-\frac{2}{h} \int_{0}^{h} \sin(\frac{\pi kx_3}{h} ) \cdot \tilde{p}_1 (t) d x_3,
\end{align}
thus, we obtain the following explicit form for $U=U(x,t)=U(x_3,t)$.
\begin{theorem}
	Let $u=(U(x,t),0,0)$ be a solution of the NSE $(\ref{nse})$ with $P=P(x,t)$ given, satisfying $(\bf{A}.1)$, $(\bf{A}.2)$ and $(\bf{A}.3)$. Then $U=U(x,t)=U(x_3,t)$ and
	\begin{align}
		\label{form_U(x_3,t)_p(t)}
		U(x_3,t)=\int_{-\infty}^{t}\sum_{k=1}^{\infty}\frac{2 \left( (-1)^k-1 \right )}{k \pi} e^{-\nu (\frac{\pi k}{h})^2 (t-\tau)}\sin(\frac{\pi kx_3}{h})\tilde{p}_1(\tau) d {\tau}   
	\end{align}
\end{theorem}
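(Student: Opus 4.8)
The plan is to derive the explicit formula \eqref{form_U(x_3,t)_p(t)} by assembling the pieces already set up in the preceding subsubsections and then solving the scalar ODE \eqref{eqn_for_coef_simp_U} for each Fourier-sine coefficient. First I would note that the work done above already reduces the problem completely: assumptions $(\mathbf{A}.1)$--$(\mathbf{A}.3)$ together with the ansatz $u=(U(x,t),0,0)$ force, via \eqref{nse_spec}, that $U$ is independent of $x_1$ and that $P=P(x_1,t)=\tilde p_0(t)+x_1\tilde p_1(t)$; then the energy argument using the Poincar\'e inequality \eqref{poincare} kills all nonzero Fourier modes in $x_2$, so $U=U(x_3,t)$ solves \eqref{eqn_for_simp_U} with the no-slip boundary condition $U(0,t)=U(h,t)=0$. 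So the only remaining content is to integrate \eqref{eqn_for_coef_simp_U}.

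Next I would carry out the elementary computations in \eqref{eqn_for_coef_simp_U}. The right-hand side integral is $-\tilde p_1(t)\int_0^h \sin(\pi k x_3/h)\,dx_3 = -\tilde p_1(t)\cdot\frac{h}{\pi k}\bigl(1-(-1)^k\bigr)$, so after multiplying by $\tfrac{2}{h}$ the forcing term for $\hat U(k,t)$ is $\tfrac{2}{\pi k}\bigl((-1)^k-1\bigr)\tilde p_1(t)$; this is the source of the factor $\frac{2((-1)^k-1)}{k\pi}$ in \eqref{form_U(x_3,t)_p(t)}. The linear ODE $\frac{d}{dt}\hat U(k,t)+\nu(\pi k/h)^2\hat U(k,t)=g_k(t)$ with $g_k(t):=\tfrac{2}{\pi k}\bigl((-1)^k-1\bigr)\tilde p_1(t)$ is solved by the integrating factor $e^{\nu(\pi k/h)^2 t}$, giving $\hat U(k,t)=\int_{t_0}^{t}e^{-\nu(\pi k/h)^2(t-\tau)}g_k(\tau)\,d\tau+e^{-\nu(\pi k/h)^2(t-t_0)}\hat U(k,t_0)$. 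Using $(\mathbf{A}.3)$ to bound $\hat U(k,t_0)$ and letting $t_0\to-\infty$ as in the derivation of \eqref{ineq_for_W}, the boundary term vanishes and one is left with $\hat U(k,t)=\int_{-\infty}^{t}e^{-\nu(\pi k/h)^2(t-\tau)}g_k(\tau)\,d\tau$. Substituting back into $U(x_3,t)=\sum_{k\ge1}\hat U(k,t)\sin(\pi k x_3/h)$ and interchanging sum and integral yields exactly \eqref{form_U(x_3,t)_p(t)} once one identifies $\tilde p_1(\tau)$ with the quantity denoted $p_1(\tau)$ there (indeed $\tilde p_1$ is, up to the constant $1/\Pi_1$, the same pressure drop as in Remark~\ref{rk_pressure_dp}; I would either absorb this into the normalization or simply write $\tilde p_1$).

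The main obstacle — the only genuine analytical point rather than bookkeeping — is the justification of the limit $t_0\to-\infty$ and of the interchange of summation and integration. For the former, the key is that $(\mathbf{A}.3)$ gives a uniform-in-$t_0$ bound on $\int_0^h|U(x_3,t_0)|^2\,dx_3$, hence (by Parseval) on each $|\hat U(k,t_0)|$, so the factor $e^{-\nu(\pi k/h)^2(t-t_0)}\to0$ controls the boundary term; this is precisely the mechanism already used above for $W(n,t)$. For the interchange, one uses the exponential decay $e^{-\nu(\pi k/h)^2(t-\tau)}$ in $\tau$ together with the bound $|((-1)^k-1)/k|\le 2/k$ and, if desired, local boundedness of $\tilde p_1$ (which follows from the smoothness $(\mathbf{A}.1)$ of $u$ through \eqref{equ_U}), so that dominated convergence or Fubini applies. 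I would state these justifications briefly and then declare the formula \eqref{form_U(x_3,t)_p(t)} established; uniqueness of $U$ given $P$ is automatic since every step of the reduction was an equivalence under $(\mathbf{A}.1)$--$(\mathbf{A}.3)$.
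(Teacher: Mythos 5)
Your proposal is correct and follows essentially the same route as the paper: reduce to the Fourier-sine coefficients $\hat U(k,t)$ via the preceding subsections, solve the linear ODE \eqref{eqn_for_coef_simp_U} with the integrating factor $e^{\nu(\pi k/h)^2 t}$, and use $(\bf{A}.3)$ to discard the boundary term as $t_0\to-\infty$. The only difference is that you spell out the justification of the limit and the interchange of sum and integral, which the paper leaves implicit.
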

\begin{proof}
	Indeed,  we see from (\ref{eqn_for_coef_simp_U}) 
	\begin{align*}
		\frac{\partial}{\partial t}\hat{U}(k,t)+\nu  (\frac{\pi k}{h})^2 \hat{U}(k,t) =\frac{2}{k \pi}\tilde{p}_1(\tau) \left ( (-1)^k-1 \right ),
	\end{align*}
	from which, upon integration, we get
	\begin{align*}
		\hat{U}(k,t) =e^{-\nu (\frac{\pi k}{h})^2 (t-t_0)} \hat{U}(k,t_0)+\frac{2}{k \pi} \left( (-1)^k-1 \right )\int_{t_0}^{t}e^{-\nu (\frac{\pi k}{h})^2 (t-\tau)}\tilde{p}_1(\tau) d {\tau},
	\end{align*}
	which, implies (\ref{form_U(x_3,t)_p(t)}), by taking $t_0 \rightarrow -\infty$.
\end{proof}
The above theorem gives that non-stationary solutions for NSE (\ref{nse}) exist, besides those stationary solutions given by (\ref{vel_par_form}) and (\ref{nse_par_form}). In particular, if $\tilde{p}_1(t)$ is a constant, the Poiseuille flow (time independent) is recovered. This matches the form given in $(\ref{nse_par_form})$.

\begin{corollary}
	If, additionally, we assume that $\tilde{p}_1(t)=\tilde{p}_{10}$, where $\tilde{p}_{10}\in \mathbb{R}$ is a constant, then 
	\begin{align}
		\label{explicit_form_U(x_3,t)}
		U(x_3,t)&=\sum_{k=1}^{\infty}\frac{2 \tilde{p}_{10}h^2}{ \nu (k \pi)^3}  \left ( (-1)^k-1 \right) \sin(\frac{\pi kx_3}{h})\\
		&=-\frac{\tilde{p}_{10}}{2\nu}x_3(h-x_3). \nonumber
	\end{align}
\end{corollary}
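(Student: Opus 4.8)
The plan is to start from the explicit formula (\ref{form_U(x_3,t)_p(t)}) of the theorem and specialize it to the constant forcing $\tilde p_1(\tau)\equiv\tilde p_{10}$. Since $\tilde p_{10}$ is independent of $\tau$, it factors out of the $\tau$-integral, and for each fixed $k\ge 1$ the remaining integral is elementary: $\int_{-\infty}^{t}e^{-\nu(\pi k/h)^2(t-\tau)}\,d\tau=\dfrac{1}{\nu(\pi k/h)^2}=\dfrac{h^2}{\nu\pi^2k^2}$. A Tonelli/Fubini argument (the double series/integral is absolutely convergent, being dominated by $\frac{4|\tilde p_{10}|}{k\pi}e^{-\nu(\pi k/h)^2(t-\tau)}$, whose $\tau$-integral summed over $k$ gives a constant times $\sum_k k^{-3}<\infty$) lets me exchange $\sum_k$ with $\int_{-\infty}^t$, and I obtain
\[
U(x_3,t)=\sum_{k=1}^{\infty}\frac{2\tilde p_{10}h^2}{\nu(k\pi)^3}\bigl((-1)^k-1\bigr)\sin\frac{\pi k x_3}{h},
\]
which is the first claimed equality; note that $U$ no longer depends on $t$, as it should since the forcing is steady.

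For the second equality I would identify this series as the Fourier sine series on $[0,h]$ of $g(x_3):=-\frac{\tilde p_{10}}{2\nu}x_3(h-x_3)$. Computing the sine coefficients $b_k=\frac{2}{h}\int_0^h x_3(h-x_3)\sin\frac{\pi k x_3}{h}\,dx_3$ by two integrations by parts (all boundary terms vanish because $x_3(h-x_3)$ and its relevant derivatives pair with $\sin$ or vanish at $x_3=0,h$) gives $\int_0^h x_3(h-x_3)\sin\frac{\pi k x_3}{h}\,dx_3=\frac{2h^3}{\pi^3k^3}\bigl(1-(-1)^k\bigr)$, hence $b_k=\frac{4h^2}{\pi^3k^3}\bigl(1-(-1)^k\bigr)$. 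Multiplying by $-\frac{\tilde p_{10}}{2\nu}$ reproduces exactly the coefficients $\frac{2\tilde p_{10}h^2}{\nu(k\pi)^3}\bigl((-1)^k-1\bigr)$ of the series above. Since $g$ is continuous and piecewise $C^1$ on $[0,h]$ with $g(0)=g(h)=0$, its Fourier sine series converges to $g$ pointwise on $[0,h]$, which yields $U(x_3,t)=-\frac{\tilde p_{10}}{2\nu}x_3(h-x_3)$.

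An equivalent shortcut bypasses the coefficient computation: the function $g(x_3)=-\frac{\tilde p_{10}}{2\nu}x_3(h-x_3)$ satisfies $-\nu g''=-\tilde p_{10}$ with $g(0)=g(h)=0$, i.e.\ it is \emph{the} stationary solution of (\ref{eqn_for_simp_U}) for constant forcing $\tilde p_{10}$; the series form of $U$ is also a stationary solution of the same two-point boundary value problem, and such a solution is unique, so the two coincide. This also exhibits the link with (\ref{nse_par_form}): after completing the square, $g$ is the Poiseuille profile there with $b=-\tilde p_{10}h^2/(8\nu)$. The only computations are the two elementary integrals above, and the only point needing a word is the interchange of sum and integral in the first step (immediate from the exponential factor) and the pointwise convergence of the sine series (immediate since $g$ vanishes at the endpoints); I do not expect a genuine obstacle.
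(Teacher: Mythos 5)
Your proof is correct and follows essentially the same route the paper takes (the paper leaves this corollary unproved but establishes the analogous NS-$\alpha$ corollary by invoking exactly the sine expansion $x(h-x)=\sum_{k=1}^{\infty}\frac{4h^2(1-(-1)^k)}{(\pi k)^3}\sin(\frac{\pi k x}{h})$ that you derive): evaluate the elementary $\tau$-integral to get the coefficients, then match them with the sine series of $-\frac{\tilde{p}_{10}}{2\nu}x_3(h-x_3)$. Your alternative argument via uniqueness of the stationary solution of the two-point boundary value problem is a clean shortcut, but not a substantively different proof.
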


\subsubsection{A symmetry property}
Consider the following averaged quantity, 
\begin{align}\label{<U>}
	<U>_2(x_3,t)=\frac{1}{\Pi_2}\int_{0}^{\Pi_2}U(x_2,x_3,t)dx_2,
\end{align}
which, from (\ref{no_slip_bc}), satisfies
\begin{align}\label{boundary_<U>}
	<U>_2(x_3,t)|_{x_3=0,h}=0.
\end{align}

We can prove that $<U>_2(x_3,t)$ satisfies the following symmetry property, which is $a$  $priori$ assumed in \cite{CFHO98}-\cite{CFHOTW99} in the study of the steady solutions.
\begin{theorem}
	\label{symmetry_property}
	$<U>_2(x_3,t)$ in $(\ref{<U>})$ satisfies
	\begin{align}\label{sym_<U>}
		<U>_2(h-x_3,t)= <U>_2(x_3,t), 
	\end{align}
	for all $x_3 \in [0,h], t\in \mathbb{R}$.
\end{theorem}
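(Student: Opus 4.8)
The plan is to reduce the claim to a symmetry of the explicit representation just obtained. By the reduction carried out in \S3.1.2 (see (\ref{tj3})), under the hypotheses $(\bf{A}.1)$--$(\bf{A}.3)$ the function $U$ does not depend on $x_2$, so that $<U>_2(x_3,t)=U(x_3,t)$ and the formula (\ref{form_U(x_3,t)_p(t)}) is available. Hence it suffices to show that $U(x_3,t)$ is invariant under the reflection $x_3\mapsto h-x_3$ about the midline of the channel.

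The quickest route is to apply this reflection term by term in (\ref{form_U(x_3,t)_p(t)}). Using the elementary identity $\sin\!\big(\tfrac{\pi k (h-x_3)}{h}\big)=(-1)^{k+1}\sin\!\big(\tfrac{\pi k x_3}{h}\big)$, each summand acquires a factor $(-1)^{k+1}$; but the coefficient $(-1)^{k}-1$ vanishes whenever $k$ is even, and for odd $k$ one has $(-1)^{k+1}=1$, so every surviving summand is unchanged. This gives $U(h-x_3,t)=U(x_3,t)$, i.e.\ (\ref{sym_<U>}). The only point needing a word of justification is the interchange of the reflection with the series and the $\tau$-integral; this is immediate because $x_3\mapsto h-x_3$ acts pointwise in the spatial variable and, for $\tilde p_1$ locally bounded, the series in (\ref{form_U(x_3,t)_p(t)}) converges uniformly in $x_3\in[0,h]$ (the factor $e^{-\nu(\pi k/h)^2(t-\tau)}$ forces rapid decay after integrating a bounded forcing).

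An alternative, representation-free argument is to invoke uniqueness for the heat equation: writing $W(x_3,t):=<U>_2(x_3,t)$, it solves (\ref{eqn_for_simp_U}) with boundary condition (\ref{boundary_<U>}); since the forcing $-\tilde p_1(t)$ is independent of $x_3$, and both $\partial_{x_3}^2$ and the boundary $\{0,h\}$ are invariant under $x_3\mapsto h-x_3$, the reflected function $\widetilde W(x_3,t):=W(h-x_3,t)$ solves the same problem. Their difference satisfies $\partial_t(W-\widetilde W)-\nu\partial_{x_3}^2(W-\widetilde W)=0$ with zero boundary values and, thanks to $(\bf{A}.3)$, bounded energy $\int_0^h|W-\widetilde W|^2\,dx_3$ uniformly in $t$; running the energy estimate exactly as in (\ref{nonzero_n_integral})--(\ref{ineq_for_W}) and letting $t_0\to-\infty$ forces $W-\widetilde W\equiv 0$. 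I do not anticipate any serious obstacle in either route; the one thing to get right in the first is the parity bookkeeping (only odd modes survive), and in the second, confirming that the average $<U>_2$ inherits bounded energy from $(\bf{A}.3)$ so that the $t_0\to-\infty$ limit is legitimate.
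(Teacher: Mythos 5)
Your proposal is correct, and in fact contains two valid arguments. Your second, ``representation-free'' route is essentially the paper's own proof verbatim: the paper sets $\widetilde{<U>}(x_3,t):=<U>_2(h-x_3,t)-<U>_2(x_3,t)$, observes that averaging (\ref{equ_U}) in $x_2$ kills the $\partial^2/\partial x_2^2$ term by periodicity so that the $x_3$-independent forcing cancels in the difference, and then runs exactly the energy estimate you describe (multiply (\ref{equ_tilde_<U>}) by $\widetilde{<U>}$, integrate, apply Poincar\'e (\ref{poincare}), integrate the resulting differential inequality, and let $t_0\to-\infty$ using $(\bf{A}.3)$). Your first route is genuinely different and arguably more economical: since the reduction of \S 3.1.2 and the representation (\ref{form_U(x_3,t)_p(t)}) have already been established at this point in the paper, the symmetry is read off mode by mode from $\sin\bigl(\tfrac{\pi k(h-x_3)}{h}\bigr)=(-1)^{k+1}\sin\bigl(\tfrac{\pi k x_3}{h}\bigr)$ together with the vanishing of the coefficient $(-1)^k-1$ for even $k$; your worry about interchanging the reflection with the sum is not really an issue, since the reflection acts pointwise in $x_3$ and one only needs the pointwise validity of the series representation, which the preceding theorem already supplies. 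What the paper's (and your second) argument buys is independence from the explicit solution formula --- it uses only the averaged PDE, the boundary condition (\ref{boundary_<U>}), and $(\bf{A}.3)$ --- whereas your first argument buys brevity at the cost of relying on the full reduction $U=U(x_3,t)$. Both are sound.
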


\begin{proof}
	By taking average in $x_2$ as defined in $(\ref{<U>})$ of the equation (\ref{equ_U}), and invoking the periodicity condition (\ref{periodicity_u}), we see that $<U>_2(x_3,t)$ must satisfy
	\begin{align}\label{equ_<U>}
		\frac{\partial <U>_2}{\partial t}-\nu \frac{\partial ^2 <U>_2}{\partial x_3^2}=-\tilde{p}_1(t).
	\end{align}
	
	Define,
	\begin{align}\label{tilde_<U>}
		\widetilde{<U>}(x_3,t):=<U>_2(h-x_3,t)-<U>_2(x_3,t),
	\end{align}
	then 
	\begin{align}\label{equ_tilde_<U>}
		\frac{\partial \widetilde{<U>}}{\partial t}-\nu\frac{\partial ^2 \widetilde{<U>}}{\partial x_3^2}=0,
	\end{align}
	with boundary conditions
	\begin{align}\label{boundary_tilde_<U>}
		\widetilde{<U>}(x_3,t)|_{x_3=0,h}=0,
	\end{align}
	which follows from (\ref{boundary_<U>}).
	
	Multiplying equation (\ref{equ_tilde_<U>}) by $\widetilde{<U>}$ and then integrating with respect to $x_3$ from $0$ to $h$, using the boundary conditions (\ref{boundary_tilde_<U>}), we get
	\begin{align*}
		0&=\int_{0}^{h}\bigg(\frac{\partial \widetilde{<U>}}{\partial t}-\nu\frac{\partial ^2 \widetilde{<U>}}{\partial x_3^2}\bigg) \widetilde{<U>}(x_3,t) dx_3\\
		&=\frac{1}{2}\frac{d}{dt}\int_{0}^{h}\bigg(\widetilde{<U>}(x_3,t)\bigg)^2dx_3-\nu \int_{0}^{h} \frac{\partial ^2 \widetilde{<U>}}{\partial x_3^2}\widetilde{<U>}(x_3,t) dx_3\\
		&=\frac{1}{2}\frac{d}{dt}\int_{0}^{h}\bigg(\widetilde{<U>}(x_3,t)\bigg)^2dx_3+\nu \int_{0}^{h}\bigg(\frac{\partial}{\partial x_3}\widetilde{<U>}(x_3,t)\bigg)^2dx_3.
	\end{align*}
	Invoking Poincar$\acute{e}$ inequality (\ref{poincare}), we obtain
	\begin{align}\label{ineq_tilde_<U>}
		\frac{1}{2}\frac{d}{dt}\int_{0}^{h}\bigg(\widetilde{<U>}(x_3,t)\bigg)^2dx_3+\frac{\nu}{h^2}\int_{0}^{h}\bigg(\widetilde{<U>}(x_3,t)\bigg)^2dx_3 \leq 0,
	\end{align}
	
	therefore, we get from (\ref{ineq_tilde_<U>}) that
	\begin{align}\label{tilde_<U>_sol}
		\psi(t)\leq \psi(t_0)e^{-(t-t_0)2\nu/h^2}, 
	\end{align}
	for $-\infty<t_0<t<\infty$, where $\psi(t):=\int_{0}^{h}\left(\widetilde{<U>}(x_3,t)\right)^2dx_3$ is nonnegative for all $t\in \mathbb{R}$.
	Under the assumption $(\bf{A}.3)$, we could let $t_0\rightarrow -\infty$ in (\ref{tilde_<U>_sol}) to get $\psi(t)\equiv 0, \forall t\in \mathbb{R}$, hence $\widetilde{<U>}(x_3,t)\equiv 0, \forall x_3 \in [0,h], t\in \mathbb{R}$, and consequently, the symmetry property (\ref{sym_<U>}) on $<U>_2(x_3,t)$ is obtained.
\end{proof}

\subsection{The NS-$\alpha$ case}
We assume the following form of solution for the NS-$\alpha$ (\ref{che}):
\begin{align}\label{assump_1_v}
	u=(U(x,t),0,0)
\end{align}
and $V=(1-
\alpha^2\Delta)U$, for $0\leq x_3\leq h$, with $u(x,t)$ satisfying $(\bf{A}.1)$, $(\bf{A}.2)$ and $(\bf{A}.3)$.

\subsubsection{Simple form}
Using (\ref{assump_1_v}), the NS-$\alpha$ (\ref{che}) becomes
\begin{align}\label{che_spec}
	\left\{\begin{matrix}
		&\frac{\partial V}{\partial t}-\nu \Delta V+\frac{\partial Q}{\partial x_1}=0, \\ 
		&V\frac{\partial U}{\partial x_2}=-\frac{\partial Q}{\partial x_2}, \\ 
		&V\frac{\partial U}{\partial x_3}=-\frac{\partial Q}{\partial x_3},\\
		&\frac{\partial U}{\partial x_1}=0.
	\end{matrix}\right.
\end{align}
Using the first and fourth equations in (\ref{che_spec}), we obtain $\frac{\partial ^2 Q}{\partial x_1^2}=0$. By taking partial derivative in the second and third equations in (\ref{che_spec}) with respect to $x_1$, we see $\frac{\partial ^2Q}{\partial x_1 \partial x_2}=0=\frac{\partial ^2Q}{\partial x_3 \partial x_1}$, thus $Q$ must be of the form
\begin{align}\label{form_of_q}
	Q=Q(x_1,x_2,x_3,t)=\tilde{q}_0(x_2,x_3,t)+x_1\tilde{q}_1(t).
\end{align}

Hence, the first equation in (\ref{che_spec}) can be further simplified to be the following equation
\begin{align}\label{equ_V}
	\frac{\partial V}{\partial t}-\nu (\frac{\partial^2}{\partial x_2^2}+\frac{\partial ^2}{\partial x_3^2})V=-\tilde{q}_1(t),
\end{align}
which is strikingly similar to (\ref{equ_U}).

The no-slip boundary condition (\ref{no_slip_bc}) implies
\begin{align}
	\label{no-slip_for_U_CHE}
	U(x_2,x_3,t)|_{x_3=0,h}=0.
\end{align}


\subsubsection{Solving $(\ref{che_spec})$}

By no-slip boundary condition (\ref{no-slip_for_U_CHE}), one could write

$U(x_2,x_3,t)=\sum_{n=-\infty}^{\infty}\hat{U}(n,x_3,t)e^{\frac{i 2 \pi n}{\Pi_2} x_2}$, hence $V=(1-{\alpha}^2 \Delta)U=\left(1+{\alpha}^2 (\frac{2 \pi n}{\Pi_2})^2-{\alpha}^2 \frac{\partial^2}{\partial x_3^2} \right)U$.

Similarly, we have the equations for $\hat{U}(n,x_3,t)$, which follow from (\ref{equ_V}):

(1) when $n=0$
\begin{align*}
	\left [ 1-{\alpha}^2 \frac{\partial^2}{\partial x_3^2} \right ] \left [ \frac{\partial}{\partial t}-\nu \frac{\partial^2}{\partial x_3^2} \right ] \hat{U}(n,x_3,t)=-\tilde{q}_1 (t).
\end{align*}

(2) when $n \neq 0$
\begin{align}\label{tj5}
	\left [ 1+{\alpha}^2 (\frac{2 \pi n}{\Pi_2})^2-{\alpha}^2 \frac{\partial^2}{\partial x_3^2} \right ] \left [ \frac{\partial}{\partial t}+{\nu} (\frac{2 \pi n}{\Pi_2})^2-\nu \frac{\partial^2}{\partial x_3^2} \right ] \hat{U}(n,x_3,t)=0.
\end{align}
Using arguments similar to those given in previous section, we get $\hat{U}(n,x_3,t)=0$, when $n \neq 0$, so $U=\hat{U}(0,x_3,t)=U(x_3,t)$ is also independent of $x_2$, and satisfies,
\begin{align}
	\label{eqn_U_che}
	\left [ 1-{\alpha}^2 \frac{\partial^2}{\partial x_3^2} \right ] \left [ \frac{\partial}{\partial t}-\nu \frac{\partial^2}{\partial x_3^2} \right ] \hat{U}(0,x_3,t)=-\tilde{q}_1 (t).
\end{align}
From (\ref{eqn_U_che}) and (\ref{form_of_p}), we obtain the following theorem
\begin{theorem}
	\label{ge_nse_to_che}
	Let $u=(U(x,t),0,0)$ be a solution of the NSE $(\ref{nse})$ with $P=P(x,t)$ given, satisfying $(\bf{A}.1)$, $(\bf{A}.2)$ and $(\bf{A}.3)$. Then $u$ is also a solution of the NS-$\alpha$ $(\ref{che})$ with $Q=Q(x,t)$ given by,
	\begin{align*}
		Q=Q(x,t)=\tilde{p}_1(t)x_1-\frac{1}{2}\left(U^2-\alpha^2(\frac{\partial U}{\partial x_3})^2\right).
	\end{align*}
\end{theorem}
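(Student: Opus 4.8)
The plan is a direct verification. First I would invoke the analysis already carried out for the NSE case under the hypotheses $(\mathbf{A}.1)$–$(\mathbf{A}.3)$: since $u=(U(x,t),0,0)$ solves (\ref{nse}), we know that $U=U(x_3,t)$ is independent of $x_1$ and $x_2$, that $P=\tilde p_0(t)+x_1\tilde p_1(t)$ as in (\ref{form_of_p}), and that $U$ obeys (\ref{eqn_for_simp_U}), i.e.\ $\partial_t U-\nu\,\partial_{x_3}^2U=-\tilde p_1(t)$, where $\tilde p_1$ depends only on $t$. Consequently $v=(1-\alpha^2\Delta)u=(V,0,0)$ with $V=V(x_3,t)=U-\alpha^2\partial_{x_3}^2U$, again a function of $x_3$ and $t$ only; hence the no-slip condition (\ref{no_slip_bc}) for $u$ is inherited and $\nabla\cdot u=0$ holds trivially.

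Next I would substitute $u$, $v$ and the candidate $Q=\tilde p_1(t)\,x_1-\tfrac12\bigl(U^2-\alpha^2(\partial_{x_3}U)^2\bigr)$ into the three scalar components of (\ref{che}). Because $v$ is independent of $x_1$, $(u\cdot\nabla)v=U\,\partial_{x_1}v=0$, and $\sum_{j=1}^3 v_j\nabla u_j=V\nabla U=(0,0,\,V\,\partial_{x_3}U)$. The second component of (\ref{che}) then reads $0=-\partial_{x_2}Q$, which holds since $Q$ does not depend on $x_2$. For the third component one needs $V\,\partial_{x_3}U=-\partial_{x_3}Q$; this follows from the algebraic identity $V\,\partial_{x_3}U=(U-\alpha^2\partial_{x_3}^2U)\,\partial_{x_3}U=\partial_{x_3}\!\bigl(\tfrac12U^2-\tfrac12\alpha^2(\partial_{x_3}U)^2\bigr)$, which is precisely $-\partial_{x_3}Q$. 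For the first component one must check $\partial_t V-\nu\,\partial_{x_3}^2V=-\partial_{x_1}Q=-\tilde p_1(t)$; applying the operator $(1-\alpha^2\partial_{x_3}^2)$ to the NSE identity $\partial_t U-\nu\,\partial_{x_3}^2U=-\tilde p_1(t)$ and using that $\tilde p_1(t)$ is constant in $x_3$ yields exactly $\partial_t V-\nu\,\partial_{x_3}^2V=-\tilde p_1(t)$, which is (\ref{eqn_U_che}) with $\tilde q_1=\tilde p_1$. Having verified all four equations of (\ref{che}) together with the divergence-free and boundary conditions, the theorem follows.

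I do not expect a serious obstacle: once $U$ is known to depend on $x_3$ and $t$ alone, the statement is essentially bookkeeping. The only point that requires a little care is the nonlinear term $\sum_j v_j\nabla u_j$, which is the feature distinguishing the NS-$\alpha$ system from the NSE — it contributes the term $V\,\partial_{x_3}U$ in the $x_3$-direction, and one must recognize it as an exact $x_3$-derivative so that it can be absorbed into $Q$; this is the origin of the $-\tfrac12\bigl(U^2-\alpha^2(\partial_{x_3}U)^2\bigr)$ correction added to the linear-in-$x_1$ pressure term. One may also remark that $Q$ is determined by the equations only up to an additive function of $t$, and the displayed expression is the natural normalization.
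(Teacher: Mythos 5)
Your verification is correct and complete, and it follows essentially the same route as the paper, which obtains the theorem by comparing the reduced NS-$\alpha$ system (\ref{che_spec})--(\ref{eqn_U_che}) with the NSE reduction (\ref{form_of_p}), (\ref{eqn_for_simp_U}); you simply run the same computation as a direct substitution, correctly identifying the nonlinear term $V\,\partial_{x_3}U$ as the exact derivative $\partial_{x_3}\bigl(\tfrac12 U^2-\tfrac12\alpha^2(\partial_{x_3}U)^2\bigr)$ absorbed into $Q$. No gaps.
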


We express $U=U(x_3,t)=\sum_{k=1}^{\infty}\hat{U}(k,t) \sin(\frac{\pi kx_3}{h})=\sum_{k=1}^{\infty}\hat{U}(k,t) \sin(\frac{\pi k x_3}{h})$. From (\ref{eqn_U_che}), it follows that the Fourier coefficient $\hat{U}(k,t)$ satisfies:
\begin{align}
	\label{U_che_eqn_1}
	\left(1+{\alpha}^2 (\frac{\pi k}{h})^2\right) \left [ \frac{\partial}{\partial t}\hat{U}(k,t)+\nu  (\frac{\pi k}{h})^2 \hat{U}(k,t) \right ] =-\frac{2}{h} \int_{0}^{h} \sin(\frac{\pi kx_3}{h}  ) \cdot \tilde{q}_1 (t) d x_3,
\end{align}
hence, using the same procedure as in previous section, we obtain the following form of the solution $U=U(x_3,t)$,
\begin{theorem}
	\label{che_q(t)}
	The solution $U=U(x_3,t)$ is given by 
	\begin{align}
		\label{form_U(x_3,t)_q(t)}
		U(x_3,t)=\int_{-\infty}^{t}\sum_{k=1}^{\infty}\frac{2\left((-1)^k-1\right)}{\pi k(1+\alpha^2(\pi k/h)^2)}  e^{-\nu(\frac{\pi k}{h})^2(t-\tau)}\sin(\frac{\pi kx_3}{h})\tilde{q}_1(\tau)d\tau.
	\end{align}
\end{theorem}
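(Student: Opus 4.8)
The plan is to follow exactly the route used for the NSE theorem, now applied to the decoupled scalar equation (\ref{U_che_eqn_1}) for each Fourier--sine coefficient $\hat{U}(k,t)$. First I would evaluate the elementary integral on the right-hand side of (\ref{U_che_eqn_1}), namely $\int_{0}^{h}\sin(\pi k x_3/h)\,dx_3=\frac{h}{\pi k}\left(1-(-1)^k\right)$, so that (\ref{U_che_eqn_1}) collapses to the scalar linear ODE
\begin{align*}
\frac{\partial}{\partial t}\hat{U}(k,t)+\nu\Big(\frac{\pi k}{h}\Big)^2\hat{U}(k,t)=\frac{2\left((-1)^k-1\right)}{\pi k\left(1+\alpha^2(\pi k/h)^2\right)}\,\tilde{q}_1(t).
\end{align*}
Next I would solve this ODE with the integrating factor $e^{\nu(\pi k/h)^2 t}$, obtaining for any $t_0<t$
\begin{align*}
\hat{U}(k,t)=e^{-\nu(\pi k/h)^2(t-t_0)}\hat{U}(k,t_0)+\frac{2\left((-1)^k-1\right)}{\pi k\left(1+\alpha^2(\pi k/h)^2\right)}\int_{t_0}^{t}e^{-\nu(\pi k/h)^2(t-\tau)}\tilde{q}_1(\tau)\,d\tau.
\end{align*}

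Then I would let $t_0\to-\infty$. As in the NSE argument, assumption $(\bf{A}.3)$ (via Parseval's identity applied to the sine expansion of $U(\cdot,t_0)$) keeps $|\hat{U}(k,t_0)|$ under control, so the first term tends to $0$ while the second converges to $\int_{-\infty}^{t}e^{-\nu(\pi k/h)^2(t-\tau)}\tilde{q}_1(\tau)\,d\tau$. Substituting the resulting coefficients into $U(x_3,t)=\sum_{k\ge1}\hat{U}(k,t)\sin(\pi k x_3/h)$ and interchanging the sum with the $\tau$-integral yields (\ref{form_U(x_3,t)_q(t)}). The elimination of the nonzero $x_2$-modes $\hat U(n,\cdot)$ with $n\neq0$ has already been carried out by the energy estimate recalled just before (\ref{eqn_U_che}), so no further work is needed there.

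The one point deserving genuine care, rather than routine bookkeeping, is the justification of the two limiting operations: the passage $t_0\to-\infty$ inside the $k$-dependent prefactor and exponential, and the interchange of $\sum_{k}$ with $\int_{-\infty}^{t}$. Both are governed by the exponential decay $e^{-\nu(\pi k/h)^2(t-\tau)}$ together with the extra algebraic decay $\big(1+\alpha^2(\pi k/h)^2\big)^{-1}$ in $k$ — which in fact makes the NS-$\alpha$ series converge \emph{faster} than its NSE counterpart — provided $\tilde{q}_1$ is locally bounded, a property inherited from the smoothness in $(\bf{A}.1)$ through (\ref{form_of_q}). Since this is precisely the dominated-convergence argument already used to prove the NSE theorem, I would carry it out once and simply refer back to it.
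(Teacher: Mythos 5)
Your proposal is correct and follows exactly the route the paper takes: the paper proves Theorem \ref{che_q(t)} by invoking ``the same procedure as in the previous section,'' i.e., reducing (\ref{U_che_eqn_1}) to the scalar ODE you wrote, solving with the integrating factor, and letting $t_0\to-\infty$ under $(\bf{A}.3)$ exactly as in the NSE theorem. Your added remarks on justifying the limit $t_0\to-\infty$ and the interchange of sum and integral are sound (and slightly more careful than the paper, which leaves these steps implicit).
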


The above theorem gives that non-stationary solutions for  NS-$\alpha$ (\ref{che}) exist, besides those stationary solutions given by (\ref{vel_par_form}) and (\ref{vche_par_form}). In the particular case when $\tilde{q}_1(t)$ is a constant, we obtain the steady state solution mentioned in \cite{CFHO98}-\cite{CFHOTW99}, which is basically of the form (\ref{vche_par_form}).

\begin{corollary} \footnote{Note that the coefficients in (\ref{U_che_form}) satisfy the conditions (9.7) in \cite{CFHO99} with $c=0$ and $d_0=h/2$, but there are typos in (9.7) there, namely, the left hand sides of the second and the third relations should be, respectively, $\pi_0/{\nu}$ and $\pi_{2}/{\nu}$.}
	\label{che_constant_q}
	If, furthermore, we assume $\tilde{q}_1 (t)=\tilde{q}_{10}$, where $\tilde{q}_{10}\in \mathbb{R}$ is a constant,
	then, 
	\begin{align}
		\label{U_che_form}
		U(x_3,t)&=\sum_{k=1}^{\infty}\frac{2h^2}{\nu( \pi k)^3 } \frac{1}{(1+{\alpha}^2 (\pi k/h)^2) } \tilde{q}_{10} \left((-1)^k-1 \right ) \sin(\frac{\pi kx_3}{h} )\\
		&=\frac{\alpha^2\tilde{q}_{10}}{\nu}\left(1-\frac{\cosh(\frac{x_3-h/2}{\alpha})}{\cosh(\frac{h}{2\alpha})}\right)-\frac{\tilde{q}_{10}}{2\nu}x_3(h-x_3).\nonumber 
	\end{align}
\end{corollary}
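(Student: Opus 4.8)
The plan is to feed the constant forcing $\tilde q_1(\tau)\equiv\tilde q_{10}$ into the integral representation (\ref{form_U(x_3,t)_q(t)}) furnished by Theorem \ref{che_q(t)}, carry out the (now trivial) time integration, and then recognize the resulting time-independent Fourier sine series as the Fourier sine expansion of the claimed closed form. For the first step, with $\tilde q_{10}$ constant one has $\int_{-\infty}^{t}e^{-\nu(\pi k/h)^2(t-\tau)}\,d\tau=\frac{1}{\nu(\pi k/h)^2}=\frac{h^2}{\nu\pi^2k^2}$, and substituting this into (\ref{form_U(x_3,t)_q(t)}) produces at once the first equality in (\ref{U_che_form}); in particular $U$ is independent of $t$. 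Since the $k$-th coefficient decays like $k^{-3}$, the series converges absolutely and uniformly on $[0,h]$, hence defines a continuous function that vanishes at $x_3=0$ and $x_3=h$.

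Next I would show that the closed form solves the corresponding steady equation. Let $f(x_3)$ denote the right-hand side of the second equality in (\ref{U_che_form}). It is smooth on $[0,h]$; since $\cosh\big(\pm h/(2\alpha)\big)=\cosh\big(h/(2\alpha)\big)$, one checks directly that $f(0)=f(h)=0$ and, after differentiating twice, also $f''(0)=f''(h)=0$. Because the operator $1-\alpha^2\,d^2/dx_3^2$ annihilates $\cosh\big((x_3-h/2)/\alpha\big)$ and fixes constants, we get $\big(1-\alpha^2\,d^2/dx_3^2\big)f=-\frac{\tilde q_{10}}{2\nu}x_3(h-x_3)$ — which, incidentally, is exactly the NSE Poiseuille profile of the earlier corollary with $\tilde p_{10}=\tilde q_{10}$ — and then applying $-\nu\,d^2/dx_3^2$ gives $\big(1-\alpha^2\,d^2/dx_3^2\big)\big(-\nu\,d^2/dx_3^2\big)f=-\tilde q_{10}$; that is, $f$ satisfies the steady form of (\ref{eqn_U_che}).

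Finally I would match coefficients. Pairing the identity $\big(1-\alpha^2\,d^2/dx_3^2\big)\big(-\nu\,d^2/dx_3^2\big)f=-\tilde q_{10}$ with $\sin(\pi k x_3/h)$ and integrating by parts over $[0,h]$, with $\lambda_k=\pi k/h$, every boundary contribution drops out because $f$ and $f''$ both vanish at $x_3=0,h$, leaving $\nu\lambda_k^2(1+\alpha^2\lambda_k^2)\int_0^h f\sin(\lambda_k x_3)\,dx_3=-\tilde q_{10}\int_0^h\sin(\lambda_k x_3)\,dx_3=-\tilde q_{10}\,\frac{1-(-1)^k}{\lambda_k}$. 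Solving for the coefficient shows the $k$-th sine coefficient $\frac{2}{h}\int_0^h f\sin(\lambda_k x_3)\,dx_3$ equals $\frac{2h^2}{\nu(\pi k)^3}\,\frac{1}{1+\alpha^2(\pi k/h)^2}\,\tilde q_{10}\big((-1)^k-1\big)$, precisely the coefficient in the series from the first step. As both functions are continuous on $[0,h]$ and have the same coefficients against the complete orthogonal system $\{\sin(\pi k x_3/h)\}_{k\ge1}$ of $L^2(0,h)$, they coincide, which proves (\ref{U_che_form}).

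The routine parts are the first two steps; the one place that needs a little care is the last step, and in particular the observation $f''(0)=f''(h)=0$, which is exactly what makes every boundary term in the integration by parts vanish. An equivalent route, avoiding that observation, is to expand $f$ directly by computing $\int_0^h x_3(h-x_3)\sin(\lambda_k x_3)\,dx_3$ (standard) together with $\int_0^h\cosh\big((x_3-h/2)/\alpha\big)\sin(\lambda_k x_3)\,dx_3$ (the only mildly nonroutine integral, producing a factor $\lambda_k/(\alpha^{-2}+\lambda_k^2)$) and adding them; the two contributions then combine into the stated coefficient.
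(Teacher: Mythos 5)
Your argument is correct, and its middle portion takes a genuinely different route from the paper. The paper's proof of this corollary is a two-line affair: it simply quotes the sine expansions $x(h-x)=\sum_{k\ge1}\frac{4h^2(1-(-1)^k)}{(\pi k)^3}\sin(\frac{\pi kx}{h})$ and $\cosh(\frac{h}{2\alpha})-\cosh(\frac{x-h/2}{\alpha})=\sum_{k\ge1}\frac{2\cosh(h/(2\alpha))}{\pi k(1+\alpha^2(\pi k/h)^2)}(1-(-1)^k)\sin(\frac{\pi kx}{h})$ and lets the reader combine them (the two contributions indeed collapse to the stated coefficient, since $\frac{\alpha^2}{\pi k(1+\alpha^2(\pi k/h)^2)}-\frac{h^2}{(\pi k)^3}=\frac{-h^2}{(\pi k)^3(1+\alpha^2(\pi k/h)^2)}$). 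You instead verify that the closed form $f$ satisfies the stationary equation $(1-\alpha^2\partial_{x_3}^2)(-\nu\partial_{x_3}^2)f=-\tilde q_{10}$ together with $f(0)=f(h)=0$ and $f''(0)=f''(h)=0$, and then extract its sine coefficients by integration by parts; I checked the key computations ($f''=\frac{\tilde q_{10}}{\nu}(1-\frac{\cosh((x_3-h/2)/\alpha)}{\cosh(h/(2\alpha))})$ does vanish at the endpoints, and $(1-\alpha^2\partial_{x_3}^2)f=-\frac{\tilde q_{10}}{2\nu}x_3(h-x_3)$) and they are right. What your route buys is self-containedness: you never need to know the $\cosh$ expansion in advance, only to differentiate, and the observation that $(1-\alpha^2\partial_{x_3}^2)f$ is exactly the NSE Poiseuille profile is a nice structural bonus. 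What the paper's route buys is brevity, at the cost of asserting the two expansion identities without proof; your closing alternative (computing $\int_0^h\cosh((x_3-h/2)/\alpha)\sin(\lambda_kx_3)\,dx_3$ directly) is essentially the paper's approach with the missing verification supplied.
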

\begin{proof}
	(\ref{U_che_form}) follows from using,
	\begin{align*}
		x(h-x)=\sum_{k=1}^{\infty}\frac{4h^2(1-(-1)^k)}{(\pi k)^3}\sin(\frac{\pi k x}{h}),
	\end{align*}
	and
	\begin{align*}
		\cosh(\frac{h}{2\alpha})-\cosh(\frac{x-h/2}{\alpha})=\sum_{k=1}^{\infty}\frac{2\cosh(h/(2\alpha))}{\pi k(1+\alpha^2(\pi k/h)^2)}\left(1-(-1)^k\right)\sin(\frac{\pi k x}{h}).
	\end{align*}
	
\end{proof}

\begin{remark}
	From Corollary $\ref{che_constant_q}$, we see that all solutions having the form $(\ref{assump_1_v})$ of the NS-$\alpha$ $(\ref{che})$ with time independent $\partial Q/{\partial x_1}$ and satisfying $(\bf{A}.1)$, $(\bf{A}.2)$ and $(\bf{A}.3)$ are actually stationary solutions.
\end{remark}

\section{Energy estimate}
In this part, we will use the inequality (\ref{ineq_form}) in Lemma \ref{inequality} to get an inequality estimating the energy of the velocity field $u(x,t)\in \mathcal{P}$.

Taking the dot product of the NSE (\ref{nse}) with $u$ and integrating over $\Omega:=[0,\Pi_1]\times[0,\Pi_2]\times[0,h]$, we get 
\begin{align}\label{form_1_1}
\frac{1}{2}\frac{d}{dt}\int_{\Omega}|u|^2d^3x+\nu \int_{\Omega}|\nabla u|^2d^3x=-\sum_{j=1}^3\int_{\Omega} \frac{\partial P}{\partial x_j}u_jd^3x,
\end{align}
since the nonlinear term $\int_{\Omega}(u\cdot \nabla)u\cdot ud^3x$ vanishes.

Indeed, using integration by parts and the periodicity conditions $(\bf{A}.2)$, one gets,  for $j=1,2$,
\begin{align*}
\int_0^{\Pi_j}u_k\frac{\partial}{\partial x_j}(u_j u_k) dx_j=-\int_0^{\Pi_j}u_ju_k\frac{\partial}{\partial x_j}u_k dx_j,  \forall k=1,2,3;
\end{align*}
similarly, using the boundary condition (\ref{no_slip_bc}) and integration by parts, we obtain
\begin{align*}
\int_0^h u_k\frac{\partial}{\partial x_3}(u_3 u_k) dx_3=-\int_0^h u_3u_k\frac{\partial}{\partial x_3}u_kdx_3, \forall k=1,2,3;
\end{align*}
so,
\begin{align*}
\int_{\Omega}(u\cdot \nabla)u\cdot ud^3x&=\int_{\Omega}\sum_{k=1}^{3} \sum_{j=1}^3 u_j(\frac{\partial}{\partial x_j}u_k)u_kd^3x\\
&=\int_{\Omega}\sum_{k=1}^{3} \sum_{j=1}^3 u_k\frac{\partial}{\partial x_j}(u_ju_k)d^3x\\
&=-\sum_{k=1}^3\int_{\Omega} (u_1u_k\frac{\partial}{\partial x_1}u_k+u_2u_k\frac{\partial}{\partial x_2}u_k+u_3u_k\frac{\partial}{\partial x_3}u_k)d^3x\\
&=-\sum_{k=1}^3\int_{\Omega}u_k\sum_{j=1}^3u_j \frac{\partial}{\partial x_j}u_kd^3x\\
&=-\int_{\Omega}(u\cdot \nabla)u\cdot ud^3x,
\end{align*}
hence,
\begin{align*}
\int_{\Omega}(u\cdot \nabla)u\cdot ud^3x=0.
\end{align*}
\begin{remark}
\label{orthogonality of B}
Observe that the above proof can be applied to show
\begin{align}
\label{ortho_of_B}
\int_{\Omega}(u\cdot \nabla)v\cdot vd^3x=0,
\end{align}
for $u,v\in \mathcal{P}$.
\end{remark}
For the term on the right hand side of (\ref{form_1_1}), we have, by (\ref{pressure_difference}),
\begin{align*}
\int_{\Omega}\frac{\partial P}{\partial x_1}u_1d^3x&=\int_{0}^{\Pi_2} \int_0^h\bigg( (Pu_1)|_{x_1=\Pi_1}-(Pu_1)|_{x_1=0}-\int_0^{\Pi_1}P \frac{\partial u_1}{\partial x_1}dx_1\bigg)dx_3dx_2 \\
&=p_1(t)\int_0^{\Pi_2}\int_0^{h} u_1|_{x_1=0}dx_3dx_2-\int_{\Omega}P\frac{\partial u_1}{\partial x_1}d^3x;
\end{align*}
similarly,
\begin{align*}
\int_{\Omega}\frac{\partial P}{\partial x_2}u_2d^3x=p_2(t)\int_0^{\Pi_1}\int_0^h u_2|_{x_2=0}dx_3dx_1-\int_{\Omega}P \frac{\partial u_2}{\partial x_2}d^3x;
\end{align*}
and, from no-slip boundary condition (\ref{no_slip_bc}),
\begin{align*}
\int_{\Omega}\frac{\partial P}{\partial x_3}u_3d^3x=-\int_{\Omega}P\frac{\partial u_3}{\partial x_3}d^3x;
\end{align*}
so,
\begin{align*}
-\int_{\Omega} \nabla P \cdot u d^3x&=\int_{\Omega}P \nabla \cdot ud^3x-p_1(t)\int_0^{\Pi_2}\int_0^h u_1|_{x_1=0}dx_3dx_2-p_2(t)\int_0^{\Pi_1}\int_0^hu_2|_{x_2=0}dx_3dx_1 \\
&=-p_1(t)\int_0^{\Pi_2}\int_0^h u_1|_{x_1=0}dx_3dx_2-p_2(t)\int_0^{\Pi_1}\int_0^hu_2|_{x_2=0}dx_3dx_1,
\end{align*}
where in the last line, the incompressibility condition (i.e., the second equation in (\ref{nse})) is used.

Therefore, using (\ref{ineq_form}) in Lemma \ref{inequality}, relations (\ref{pressure_difference}), and denoting $j^{\prime}=3-j$ for $j=1,2$, (\ref{form_1_1}) becomes
\begin{align}
\label{for_sigma_att}
\frac{1}{2}\frac{d}{dt}\int_{\Omega}|u|^2d^3x+\nu\int_{\Omega}|\nabla u|^2d^3x&=-p_1(t)\int_0^{\Pi_2}\int_0^h u_1|_{x_1=0}dx_3dx_2-p_2(t)\int_0^{\Pi_1}\int_0^hu_2|_{x_2=0}dx_3dx_1\\ \nonumber
&\leq \sum_{j=1}^2 |p_j(t)|\int_0^{\Pi_{j^{\prime}}}\int_0^h \bigg(<u_j>_j+\frac{\Pi_j}{2\sqrt{3}} <(\frac{\partial u_j}{\partial x_j})^2>_j^{1/2}\bigg)dx_3dx_{{j^{\prime}}}\\ \nonumber
&=\sum_{j=1}^2 |p_j(t)|\int_0^{\Pi_{j^{\prime}}}\int_0^h \bigg(<u_j>_j+\frac{{\Pi_j}^{1/2}}{2\sqrt{3}} \big(\int_0^{\Pi_j}(\frac{\partial u_j}{\partial x_j})^2dx_j\big)^{1/2}\bigg)dx_3dx_{{j^{\prime}}} \nonumber,
\end{align}
where $<\cdot>_j$ denotes the average in the $x_j$ direction, i.e.,
 \begin{align}
\label{avg_in_1v}
<\cdot>_j:=\frac{1}{\Pi_j}\int_{0}^{\Pi_j}\cdot dx_j.
\end{align}
We then use Young's inequality and $(\bf{A}.4)$ to get
\begin{align*}
|p_j(t)|\int_0^{\Pi_{j^{\prime}}} \int_0^h\frac{{\Pi_j}^{1/2}}{2\sqrt{3}} \big(\int_0^{\Pi_j}(\frac{\partial u_j}{\partial x_j})^2dx_j\big)^{1/2}dx_3 dx_{{j^{\prime}}}\leq 
\frac{\bar{p}^2\Pi_1\Pi_2 h}{24 \nu}+\frac{\nu}{2}\int_{\Omega}(\frac{\partial u_j}{\partial x_j})^2d^3x, j=1,2,
\end{align*}
hence,
\begin{align*}
&\frac{d}{dt}\int_{\Omega}|u|^2d^3x+2\nu \int_{\Omega}\sum_{k,l=1}^3(\frac{\partial u_k}{\partial x_l})^2d^3x-\nu \int_{\Omega}\big( (\frac{\partial u_1}{\partial x_1})^2+(\frac{\partial u_2}{\partial x_2})^2\big)d^3x\\
&\leq \frac{\bar{p}^2\Pi_1\Pi_2 h}{6 \nu}+2 \bar{p}\sum_{j=1}^2\int_0^{\Pi_{j^{\prime}}}\int_0^h <u_j>_jdx_3 dx_{j^{\prime}}\\
&\leq \frac{\bar{p}^2\Pi_1\Pi_2 h}{6 \nu}+2\bar{p}\sum_{j=1}^2(\Pi_{j^{\prime}}h)^{1/2}\big(\int_0^{\Pi_{j^{\prime}}}\int_0^h <u_j>_j^2dx_3dx_{j^{\prime}}\big)^{1/2}\\
&\leq  \frac{\bar{p}^2\Pi_1\Pi_2 h}{6 \nu}+\bar{p}^{3/2}(\Pi_1+\Pi_2)h+\bar{p}^{1/2}\sum_{j=1}^2 \int_0^{\Pi_{j^{\prime}}} \int_0^h<u_j>_j^2dx_3dx_{j^{\prime}}.
\end{align*}

To summarize, we obtain,
\begin{proposition}
\label{energy_inequality_statement}
For $u(x,t)\in \mathcal{P}$, 
\begin{align}
\label{energy_inequality}
\frac{d}{dt}\int_{\Omega}|u|^2d^3x+&2\nu \int_{\Omega}\sum_{k,l=1}^3(\frac{\partial u_k}{\partial x_l})^2d^3x-\nu \int_{\Omega}\big( (\frac{\partial u_1}{\partial x_1})^2+(\frac{\partial u_2}{\partial x_2})^2\big)d^3x \nonumber \\
&\leq  \frac{\bar{p}^2\Pi_1\Pi_2 h}{6 \nu}+\bar{p}^{3/2}(\Pi_1+\Pi_2)h+\bar{p}^{1/2}\sum_{j=1}^2 \int_0^{\Pi_{j^{\prime}}} \int_0^h<u_j>_j^2dx_3dx_{j^{\prime}}.
\end{align}

\end{proposition}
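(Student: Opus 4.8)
The plan is to apply the standard energy method to the Navier--Stokes system $(\ref{nse})$, with extra care devoted to the pressure contribution since the streamwise pressure drop $p_1(t)$ is not zero. First I would take the $L^2(\Omega)$ inner product of $(\ref{nse})$ with $u$ over the fundamental domain $\Omega=[0,\Pi_1]\times[0,\Pi_2]\times[0,h]$ and integrate, which produces the balance $(\ref{form_1_1})$,
\[
\tfrac12\,\tfrac{d}{dt}\int_\Omega|u|^2\,d^3x+\nu\int_\Omega|\nabla u|^2\,d^3x=-\int_\Omega\nabla P\cdot u\,d^3x,
\]
after verifying that the inertial term $\int_\Omega(u\cdot\nabla)u\cdot u\,d^3x$ vanishes. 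That cancellation is the classical integration-by-parts identity, legitimate here because of the $x_1,x_2$-periodicity in $(\bf{A}.2)$ and the no-slip condition $(\ref{no_slip_bc})$ on the faces $x_3=0,h$ --- the same manipulation recorded around $(\ref{ortho_of_B})$.

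Next I would rewrite the pressure term. Integrating by parts in $x_1$, $x_2$ and $x_3$ separately and invoking $\nabla\cdot u=0$, the volume contribution drops out and only the fluxes through the faces $x_j=0,\Pi_j$ ($j=1,2$) survive; using the periodicity of $u$ together with the definitions $(\ref{pressure_difference})$ of $p_1(t),p_2(t)$, these reduce to
\[
-\int_\Omega\nabla P\cdot u\,d^3x=-p_1(t)\int_0^{\Pi_2}\!\!\int_0^h u_1|_{x_1=0}\,dx_3dx_2-p_2(t)\int_0^{\Pi_1}\!\!\int_0^h u_2|_{x_2=0}\,dx_3dx_1.
\]

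The core of the argument is to estimate these boundary integrals without any closed form for $u$. Here I would invoke the elementary trace-type inequality $(\ref{ineq_form})$ of Lemma $\ref{inequality}$, which dominates $u_j|_{x_j=0}$ pointwise (in the remaining variables) by the $x_j$-average $<u_j>_j$ plus a constant multiple of $\Pi_j\,<(\partial u_j/\partial x_j)^2>_j^{1/2}$. Substituting it, and then applying Young's inequality to the derivative term together with the bound $|p_j(t)|\le\bar p$ from $(\bf{A}.4)$, produces for each $j=1,2$ a constant $\bar p^2\Pi_1\Pi_2h/(24\nu)$ and a term $\tfrac{\nu}{2}\int_\Omega(\partial u_j/\partial x_j)^2\,d^3x$ which I absorb into the viscous term $\nu\int_\Omega|\nabla u|^2$ on the left. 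What is left on the right is $2\bar p\sum_{j=1,2}\int_0^{\Pi_{j'}}\!\int_0^h<u_j>_j\,dx_3dx_{j'}$; I would bound it by Cauchy--Schwarz in $(x_3,x_{j'})$ followed by a second use of Young's inequality, splitting it into the constant $\bar p^{3/2}(\Pi_1+\Pi_2)h$ and the term $\bar p^{1/2}\sum_{j=1,2}\int_0^{\Pi_{j'}}\!\int_0^h<u_j>_j^2\,dx_3dx_{j'}$. Collecting all the pieces and multiplying through by $2$ gives exactly $(\ref{energy_inequality})$.

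The only genuinely delicate point is the handling of the boundary/pressure term: since $u$ is not available explicitly, everything rests on the trace inequality $(\ref{ineq_form})$ and on using $(\bf{A}.4)$ to keep $|p_j(t)|$ under control, so that the derivative contribution is absorbed by the viscosity and the residue is expressed solely through the averages $<u_j>_j$ --- which is precisely the shape needed afterwards when relating $\mathcal{P}$ to the weak global attractor. The remaining steps (the vanishing of the inertial term, the reduction of the pressure integral, and the final bookkeeping) are routine consequences of integration by parts, periodicity, incompressibility and no-slip.
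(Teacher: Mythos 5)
Your proposal is correct and follows essentially the same route as the paper: the energy identity $(\ref{form_1_1})$, reduction of the pressure term to the face integrals weighted by $p_1(t),p_2(t)$, the $L^{\infty}$ inequality $(\ref{ineq_form})$ of Lemma \ref{inequality} combined with $(\bf{A}.4)$ and Young's inequality to absorb the derivative part into the viscous term, and Cauchy--Schwarz plus Young's inequality on the remaining averaged term. No discrepancies to note.
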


\begin{remark}
Following the general procedure as was done in \cite{CP88}, one can start from $(\ref{for_sigma_att})$ and show the existence of the weak global attractor of the NSE $(\ref{nse})$. 
\end{remark}

\section{Harmonicity of $P=P(x,t)$ in the space variables}
\label{section_harm_P}
\subsection{Harmonicity of $P$}
The following property of $P$, namely, harmonicty in the space variable, could be immediately deduced.
Recall that $P=p+\Phi$, where $p$ is the pressure and $\Phi$ is the potential. Notice that from (\ref{nse_simple}), $P$ is independent of $x_3$.
\begin{proposition}
\label{matrix_zero}
Let $u(x,t)\in \mathcal{P}$, then $P=P(x_1,x_2,t)$ is harmonic in the space variables $x_1$ and $x_2$.
\end{proposition}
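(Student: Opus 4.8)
The plan is to differentiate the first two equations of the simplified system $(\ref{nse_simple})$ with respect to $x_1$ and $x_2$ respectively, add them, and use the divergence-free condition to kill every term except the Laplacian of $P$. Concretely, apply $\partial/\partial x_1$ to the first equation of $(\ref{nse_simple})$ and $\partial/\partial x_2$ to the second. The time-derivative terms combine to $\frac{\partial}{\partial t}\big(\frac{\partial u_1}{\partial x_1}+\frac{\partial u_2}{\partial x_2}\big)=0$ by the last equation of $(\ref{nse_simple})$; similarly each viscous term becomes $\nu\Delta$ applied to $\frac{\partial u_1}{\partial x_1}+\frac{\partial u_2}{\partial x_2}=0$, hence vanishes.

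The remaining work is to show the differentiated convective terms also sum to zero. Writing the nonlinear part of the first equation as $u_1\partial_{x_1}u_1+u_2\partial_{x_2}u_1 = \partial_{x_1}(u_1^2)/1 \cdot$... — more carefully, I would use that $(u\cdot\nabla)u_k = \sum_j \partial_{x_j}(u_j u_k)$ since $\nabla\cdot u=0$ (with $u_3\equiv 0$, only $j=1,2$ contribute). Then the sum of the $x_1$-derivative of the first convective term and the $x_2$-derivative of the second is
\[
\sum_{j=1}^{2}\partial_{x_1}\partial_{x_j}(u_j u_1)+\sum_{j=1}^{2}\partial_{x_2}\partial_{x_j}(u_j u_2)
=\sum_{k=1}^{2}\sum_{j=1}^{2}\partial_{x_k}\partial_{x_j}(u_j u_k),
\]
which equals $\sum_{j,k=1}^2 \partial_{x_j}\partial_{x_k}(u_ju_k)$, a quantity one expands and reorganizes to see it is $\partial_{x_j}\partial_{x_k}$ of a symmetric tensor whose trace-type contraction — after using $\partial_{x_1}u_1+\partial_{x_2}u_2=0$ once more on the terms where a derivative falls on the "repeated" index — telescopes to zero. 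I would just carry out this short expansion explicitly: the surviving terms are $\partial_{x_k}u_j\,\partial_{x_j}u_k$ summed over $j,k\in\{1,2\}$ plus $u_ju_k\,\partial_{x_j}\partial_{x_k}$-type terms, and the latter group is $(\partial_{x_1}u_1+\partial_{x_2}u_2)^2 + u\cdot\nabla(\nabla\cdot u)$-style expressions that vanish; the former group is manifestly the quadratic form $\sum_{j,k}(\partial_{x_k}u_j)(\partial_{x_j}u_k)$, which one checks is also forced to vanish using $\partial_{x_1}u_1=-\partial_{x_2}u_2$. Combining, $\Delta P = \partial_{x_1}^2 P + \partial_{x_2}^2 P = 0$, and since $P$ is independent of $x_3$ by the third equation of $(\ref{nse_simple})$, $P=P(x_1,x_2,t)$ is harmonic in $(x_1,x_2)$.

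The main obstacle is the bookkeeping in the convective term: one must be careful that $P$ depends on $x_1$ and possibly $x_2$ (though $(\bf A.5)$ and the analysis in \cite{CTZ15} give more structure), and that all the mixed partials of the quadratic velocity expression really do cancel rather than leaving a residual source term. A clean way to avoid errors is to observe that the pair of momentum equations says $\nabla P = \nu\Delta u_{\mathrm{h}} - \partial_t u_{\mathrm{h}} - (u\cdot\nabla)u_{\mathrm{h}}$ where $u_{\mathrm{h}}=(u_1,u_2)$, so $\Delta P = \nabla\cdot(\nabla P)$ is the horizontal divergence of the right-hand side; the divergence of $\nu\Delta u_{\mathrm{h}} - \partial_t u_{\mathrm{h}}$ is $\nu\Delta(\nabla\cdot u_{\mathrm{h}}) - \partial_t(\nabla\cdot u_{\mathrm{h}}) = 0$, and the divergence of $(u\cdot\nabla)u_{\mathrm{h}}$ is $\sum_{j,k}\partial_{x_k}(u_j\partial_{x_j}u_k) = \sum_{j,k}(\partial_{x_k}u_j)(\partial_{x_j}u_k) + \sum_k u\cdot\nabla(\partial_{x_k}u_k) = \sum_{j,k}(\partial_{x_k}u_j)(\partial_{x_j}u_k)$, and the last expression vanishes because the $2\times 2$ matrix $(\partial_{x_k}u_j)$ has zero trace and, being the gradient of a two-dimensional divergence-free field, satisfies $\sum_{j,k}(\partial_{x_k}u_j)(\partial_{x_j}u_k) = (\partial_{x_1}u_1)^2 + 2(\partial_{x_2}u_1)(\partial_{x_1}u_2) + (\partial_{x_2}u_2)^2$... — here I would invoke the identity that for a planar divergence-free field this equals $2(\partial_{x_1}u_1\,\partial_{x_2}u_2 - \partial_{x_2}u_1\,\partial_{x_1}u_2) = -2\det(\nabla u_{\mathrm{h}})$, and note this need not vanish pointwise, so in fact the conclusion is that $\Delta P = 2\det(\nabla u_{\mathrm h})$ — wait, this shows $P$ is harmonic only if $\det(\nabla u_{\mathrm h})=0$, which is the genuine content to be extracted from membership in $\mathcal P$ (in particular from Proposition~\ref{zero_avg_2nd} forcing $<u_2>\equiv 0$ together with $(\ref{simp_div_free})$). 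So the real heart of the argument is showing $\det(\nabla u_{\mathrm h})\equiv 0$, which I would derive from the Fourier-side divergence relation $(\ref{simp_div_free})$ showing $\hat u_1,\hat u_2$ are proportional with $x_3$-independent ratio, making $u_{\mathrm h}$ effectively a gradient-like field with collinear $\partial_{x_1}$ and $\partial_{x_2}$ derivatives and hence vanishing Jacobian determinant.
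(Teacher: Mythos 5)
Your derivation of the identity $\sum_{j,k=1}^{2}\partial_{x_k}u_j\,\partial_{x_j}u_k=-(\partial_{x_1}^2+\partial_{x_2}^2)P$ is exactly the paper's starting point, equation (\ref{operation_one}), and you are right to flag that this quadratic form equals $-2\det(\nabla u_{\mathrm h})$ for $u_{\mathrm h}=(u_1,u_2)$ and need not vanish pointwise for a general planar divergence-free field; identifying that as the crux is correct. The gap is in how you close it. The relation (\ref{simp_div_free}) says $\frac{2\pi k_1}{\Pi_1}\hat u_1+\frac{2\pi k_2}{\Pi_2}\hat u_2=0$ \emph{mode by mode}, with a proportionality ratio $-k_2\Pi_1/(k_1\Pi_2)$ that depends on the wavenumber $(k_1,k_2)$; it is nothing more than a restatement of incompressibility and does not make $u_1$ and $u_2$, or their gradients, pointwise collinear in physical space. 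Indeed, writing $(u_1,u_2)=(-\partial_{x_2}\psi,\partial_{x_1}\psi)$ for a stream function $\psi$, one computes $\det(\nabla u_{\mathrm h})=\det(\mathrm{Hess}\,\psi)$, which is generically nonzero, so no argument using only the divergence-free condition can yield $\det(\nabla u_{\mathrm h})\equiv 0$. Your last step therefore fails.

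The paper closes the argument with a different and more elementary observation that you do not use: the right-hand side $-(\partial_{x_1}^2+\partial_{x_2}^2)P$ of (\ref{operation_one}) is independent of $x_3$, by the third equation of (\ref{nse_simple}), whereas the left-hand side vanishes on the two planes $x_3=0$ and $x_3=h$, because $u\equiv 0$ there by the no-slip condition (\ref{no_slip_bc}) and hence all of its tangential derivatives $\partial_{x_1}u_j$ and $\partial_{x_2}u_j$ vanish there as well. An $x_3$-independent quantity that vanishes at $x_3=0$ is identically zero, which gives the harmonicity of $P$ at once; the vanishing of the Jacobian determinant is then the paper's Corollary \ref{jacob_zero}, deduced \emph{from} harmonicity rather than used to prove it. If you replace your final Fourier argument by this boundary-plus-$x_3$-independence step, your computation becomes a complete proof.
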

\begin{proof}
	From (\ref{nse_simple}), by taking ${\partial}/{\partial x_1}$ in the first equation and ${\partial}/{\partial x_2}$ in the second equation and then summing the two resulting equations, we can obtain the following,
	\begin{align}
	\label{operation_one}
	(\frac{\partial u_1}{\partial x_1})^2+2\frac{\partial u_1}{\partial x_2} \frac{\partial u_2}{\partial x_1}+(\frac{\partial u_2}{\partial x_2})^2=-(\frac{\partial^2}{\partial x_1^2}+\frac{\partial^2}{\partial x_2^2})P,
	\end{align}
	where $P=P(x_1,x_2,t)$ is independent of $x_3$ (see the third equation in (\ref{nse_simple})). 
	
	In (\ref{operation_one}), the left hand side (LHS) takes values zero at $x_3=0$ and $x_3=h$, while the right hand side (RHS) is independent of $x_3$, hence
	\begin{align}
	\label{harmonic_p}
	-(\frac{\partial^2}{\partial x_1^2}+\frac{\partial^2}{\partial x_2^2})P=0, 
	\end{align}
	for all $x_1,x_2 \in \mathbb{R}$.
\end{proof}
An intriguing corollary of the harmonicity of $P$ is the following relation.

\begin{corollary}
	\begin{align}
	\label{jacob_zero}
	\frac{\partial(u_1,u_2)}{\partial(x_1,x_2)}=\begin{vmatrix}
	\frac{\partial u_1}{\partial x_1}& \frac{\partial u_1}{\partial x_2} \\ 
	\frac{\partial u_2}{\partial x_1}&  \frac{\partial u_2}{\partial x_2}
	\end{vmatrix}=0
	\end{align}
\end{corollary}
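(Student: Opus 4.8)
The plan is to read off the Jacobian identity directly from the computation already carried out in Proposition \ref{matrix_zero}. The key algebraic observation is that the left-hand side of (\ref{operation_one}) can be rewritten using the divergence-free condition. From the fourth equation of (\ref{nse_simple}) we have $\partial u_1/\partial x_1 = -\partial u_2/\partial x_2$, so the quadratic form $(\partial u_1/\partial x_1)^2 + 2(\partial u_1/\partial x_2)(\partial u_2/\partial x_1) + (\partial u_2/\partial x_2)^2$ equals $-(\partial u_1/\partial x_1)(\partial u_2/\partial x_2) + 2(\partial u_1/\partial x_2)(\partial u_2/\partial x_1) + (\partial u_1/\partial x_1)(\partial u_2/\partial x_2) \cdot(\text{sign bookkeeping})$. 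More precisely, substituting $\partial u_2/\partial x_2 = -\partial u_1/\partial x_1$ turns the left-hand side into $(\partial u_1/\partial x_1)^2 - (\partial u_1/\partial x_1)^2 + 2(\partial u_1/\partial x_2)(\partial u_2/\partial x_1)$, wait — one must instead note that $(\partial u_1/\partial x_1)^2 + (\partial u_2/\partial x_2)^2 = -2(\partial u_1/\partial x_1)(\partial u_2/\partial x_2)$ is false; rather the correct route is: the LHS of (\ref{operation_one}) is exactly $2\bigl[(\partial u_1/\partial x_2)(\partial u_2/\partial x_1) - (\partial u_1/\partial x_1)(\partial u_2/\partial x_2)\bigr]$ once we use $(\partial u_1/\partial x_1 + \partial u_2/\partial x_2)^2 = 0$ to expand and cancel. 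Indeed $(\partial u_1/\partial x_1)^2 + (\partial u_2/\partial x_2)^2 = -2(\partial u_1/\partial x_1)(\partial u_2/\partial x_2)$ holds precisely because $\partial u_1/\partial x_1 + \partial u_2/\partial x_2 = 0$, and hence the LHS of (\ref{operation_one}) becomes $-2(\partial u_1/\partial x_1)(\partial u_2/\partial x_2) + 2(\partial u_1/\partial x_2)(\partial u_2/\partial x_1) = -2\,\partial(u_1,u_2)/\partial(x_1,x_2)$.

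So the steps are: (i) invoke the divergence-free relation $\partial u_1/\partial x_1 = -\partial u_2/\partial x_2$ from (\ref{nse_simple}); (ii) use it to rewrite the left side of (\ref{operation_one}) as $-2$ times the Jacobian determinant in (\ref{jacob_zero}); (iii) apply Proposition \ref{matrix_zero}, i.e. (\ref{harmonic_p}), which says the right side of (\ref{operation_one}) is zero; (iv) conclude the Jacobian vanishes identically. There is no real obstacle here — the only thing requiring a moment's care is the sign bookkeeping in step (ii), making sure the squared-divergence identity is applied correctly so that the off-diagonal and diagonal products assemble into the determinant $\partial u_1/\partial x_1 \cdot \partial u_2/\partial x_2 - \partial u_1/\partial x_2 \cdot \partial u_2/\partial x_1$ rather than its negative or some unrelated combination.

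A clean way to write it up avoids even mentioning (\ref{operation_one}) twice: start from (\ref{operation_one}), substitute $(\partial u_1/\partial x_1 + \partial u_2/\partial x_2)^2 = 0$, which gives $(\partial u_1/\partial x_1)^2 + (\partial u_2/\partial x_2)^2 = -2\,(\partial u_1/\partial x_1)(\partial u_2/\partial x_2)$, so the left-hand side of (\ref{operation_one}) equals
\[
-2\left(\frac{\partial u_1}{\partial x_1}\frac{\partial u_2}{\partial x_2} - \frac{\partial u_1}{\partial x_2}\frac{\partial u_2}{\partial x_1}\right) = -2\,\frac{\partial(u_1,u_2)}{\partial(x_1,x_2)}.
\]
Combining with (\ref{harmonic_p}) yields $-2\,\partial(u_1,u_2)/\partial(x_1,x_2) = 0$, which is (\ref{jacob_zero}). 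This is essentially a two-line proof once Proposition \ref{matrix_zero} is in hand; the corollary is really just the observation that the quadratic expression appearing in the Laplacian-of-pressure computation is, modulo the incompressibility constraint, a perfect determinant.
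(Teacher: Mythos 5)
Your proposal is correct and follows the same route as the paper: the paper's own proof is just the one-line remark that (\ref{jacob_zero}) follows from the fourth relation in (\ref{nse_simple}) together with the vanishing of the left-hand side of (\ref{operation_one}), and your clean version at the end (using $(\partial u_1/\partial x_1+\partial u_2/\partial x_2)^2=0$ to rewrite that left-hand side as $-2\,\partial(u_1,u_2)/\partial(x_1,x_2)$ and then invoking (\ref{harmonic_p})) is exactly the algebra the paper leaves implicit. The sign bookkeeping you worry about comes out right, so no changes are needed.
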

\begin{proof}
	(\ref{jacob_zero}) is obtained by using the fourth relation in (\ref{nse_simple}), and the fact that the LHS in (\ref{operation_one}) equals zero.
\end{proof}

\begin{remark}
	\label{functional_rel}
	For $u=(u_1,u_2,u_3)$ being the velocity field for channel flows, after we assume $u_3=0$. Corollary $\ref{jacob_zero}$ tells us that the two nonzero components, $u_1$ and $u_2$,  are not totally independent, one of them is, at least locally, a function of the other component. 
\end{remark}
\subsection{An estimate of $P$}

Proving he following estimate for $P$ is elementary.
\begin{lemma}
	\label{poisson_lem}
	For the term $P=P(x_1,x_2,t)$, we have,
	\begin{align}
	\label{bound_p}
	\sup_{x_1,x_2}|\frac{\partial}{\partial x_1}P(x_1,x_2,t)|<\infty,
	\end{align}
	for all $t\in \mathbb{R}$.
\end{lemma}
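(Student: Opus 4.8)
The plan is to read off $\partial P/\partial x_1$ directly from the momentum equation after restricting it to the wall $x_3=0$, where the no-slip condition annihilates all but two terms, and then to bound the surviving term by compactness of one period cell.

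Concretely, under the standing assumption $u_3\equiv 0$ the first line of $(\ref{nse_simple})$ reads
\[
\frac{\partial u_1}{\partial t}+u_1\frac{\partial u_1}{\partial x_1}+u_2\frac{\partial u_1}{\partial x_2}
-\nu\Big(\frac{\partial^2 u_1}{\partial x_1^2}+\frac{\partial^2 u_1}{\partial x_2^2}+\frac{\partial^2 u_1}{\partial x_3^2}\Big)=-\frac{\partial P}{\partial x_1},
\]
and, by the third line of $(\ref{nse_simple})$, $\partial P/\partial x_1=\partial P/\partial x_1(x_1,x_2,t)$ does not depend on $x_3$. I would now evaluate this identity at $x_3=0$. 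By the no-slip condition $(\ref{no_slip_bc})$ we have $u_1(x_1,x_2,0,t)\equiv 0$ for all $x_1,x_2,t$, hence $\partial_t u_1$, $\partial_{x_1}u_1$, $\partial_{x_2}u_1$, $\partial_{x_1}^2u_1$ and $\partial_{x_2}^2u_1$ all vanish on $x_3=0$, as do $u_1$ and $u_2$ themselves there. What remains is the clean identity
\[
\frac{\partial P}{\partial x_1}(x_1,x_2,t)=\nu\,\frac{\partial^2 u_1}{\partial x_3^2}(x_1,x_2,0,t).
\]

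It then remains to estimate the right-hand side. By $(\bf{A}.1)$, $u\in C^\infty(\mathcal O\times\mathbb R)$, so for each fixed $t$ the function $(x_1,x_2)\mapsto \partial_{x_3}^2u_1(x_1,x_2,0,t)$ is continuous; by $(\bf{A}.2)$ it is periodic with periods $\Pi_1$ and $\Pi_2$, so it attains its supremum over $\mathbb R^2$ on the compact cell $[0,\Pi_1]\times[0,\Pi_2]$. Therefore
\[
\sup_{x_1,x_2}\Big|\frac{\partial P}{\partial x_1}(x_1,x_2,t)\Big|=\nu\sup_{[0,\Pi_1]\times[0,\Pi_2]}\Big|\frac{\partial^2 u_1}{\partial x_3^2}(\cdot,\cdot,0,t)\Big|<\infty
\]
for every $t\in\mathbb R$, which is $(\ref{bound_p})$.

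There is essentially no obstacle here — the estimate is genuinely elementary — and the only point requiring a word of care is the assertion that $\partial P/\partial x_1$ is a bona fide function of $(x_1,x_2,t)$ alone, so that restriction to $x_3=0$ is legitimate; this is exactly the content of the second and third equations of $(\ref{nse_simple})$. As an alternative (and slightly sharper) route fitting the theme of this section, one may instead invoke Proposition~\ref{matrix_zero}: $P$ is harmonic in $(x_1,x_2)$, and subtracting off the linear-in-$x_1$ term prescribed by $(\ref{pressure_difference})$ (using $p_2\equiv 0$) leaves a smooth, doubly periodic harmonic function, which must be constant in the space variables; hence $\partial P/\partial x_1\equiv p_1(t)/\Pi_1$ and $(\ref{bound_p})$ follows immediately from $(\bf{A}.4)$ with the explicit bound $\bar p/\Pi_1$.
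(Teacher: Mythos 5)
Your proof is correct, but it takes a genuinely different route from the paper's. The paper first establishes harmonicity of $P$ in $(x_1,x_2)$ (Proposition~\ref{matrix_zero}), represents $P$ by the Poisson integral over a disk of radius $a$, differentiates the kernel $H(z/a,\theta)$ under the integral sign and bounds that derivative explicitly, and then controls $P$ on the circle $|y|=a$ via the linear growth estimate (\ref{useful_b_p}) of Lemma~\ref{bound_pressure}. Your main argument bypasses harmonic function theory entirely: restricting the first momentum equation of (\ref{nse_simple}) to the wall $x_3=0$, where the no-slip condition (\ref{no_slip_bc}) annihilates every term except the wall-normal viscous one, yields the pointwise identity $\frac{\partial P}{\partial x_1}=\nu\,\frac{\partial^2 u_1}{\partial x_3^2}\big|_{x_3=0}$; this is legitimate because $\partial P/\partial x_1$ is $x_3$-independent by the third equation of (\ref{nse_simple}) and because $(\bf{A}.1)$ gives smoothness up to the boundary, so the interior equation passes to the limit $x_3\to 0$. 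Boundedness then follows from continuity and $(\bf{A}.2)$-periodicity on a compact period cell. This is shorter, requires neither Proposition~\ref{matrix_zero} nor Lemma~\ref{bound_pressure}, and has the physical bonus of identifying $\partial P/\partial x_1$ with the wall shear term $\nu\,\partial_{x_3}^2 u_1|_{x_3=0}$. Your alternative route --- Liouville's theorem applied to the doubly periodic harmonic function $P-x_1p_1(t)/\Pi_1$, using $p_2\equiv 0$ --- is also valid and in fact proves the stronger Proposition~\ref{Liouville} directly with the explicit bound $\bar p/\Pi_1$, which would render the present lemma redundant; the paper instead reaches Proposition~\ref{Liouville} by applying Liouville to the harmonic function $\partial P/\partial x_1$ only after the boundedness supplied by this lemma.
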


\begin{proof}
	According to Poisson's formula (see \cite{JF82}; see also \cite{HK07}), we have, for any $a>0, a\in \mathbb{R}$,
	\begin{align*}
	P(z,t)=\int_{|y|=a}H(y,z)P(y,t)dy,  \text{ for } |z|<a,
	\end{align*}
	where $z=(x_1,x_2)$, and
	\begin{align*}
	H(y,z)=\frac{1}{2\pi a}\frac{a^2-|z|^2}{|z-y|^2}.
	\end{align*}

	So,
	\begin{align}
	\label{2a_pressure}
	P(z,t)=P(x_1,x_2,t)&=\frac{1}{2\pi} \int_{|y|=1}\frac{1-|\frac{z}{a}|^2}{|\frac{z}{a}-y|^2}P(ay_1,ay_2,t)dy\\
	&=\frac{1}{2\pi}\int_0^{2\pi} \frac{1-\frac{x_1^2+x_2^2}{a^2}}{1-2(\frac{x_1^2+x_2^2}{a^2})^{1/2}\cos(\theta-\omega)+\frac{x_1^2+x_2^2}{a^2}}P(a\cos\theta,a\sin\theta,t)d\theta, \nonumber
	\end{align}
	where $x_1+ix_2=|z|e^{i\omega}$ and $y_1+iy_2=|y|e^{i\theta}$.
	
	However, we will work with the following equivalent form of (\ref{2a_pressure}), namely,
	\begin{align}
	\label{work_p}
	P(x_1,x_2,t)=\frac{1}{2\pi}\int_0^{2\pi} H(z/a,\theta)P(a\cos\theta,a \sin\theta,t)d\theta,
	\end{align}
	where, for $|z|<1$,
	\begin{align*}
	H(z,\theta)&=\frac{1-x_1^2-x_2^2}{(x_1-\cos\theta)^2+(x_2-\sin\theta)^2}\\
	&=\frac{1-x_1^2-x_2^2}{1+x_1^2+x_2^2-2(x_1\cos\theta+x_2 \sin\theta)}.
	\end{align*}
	A direct calculation gives that
	\begin{align*}
	\frac{\partial}{\partial x_1} H(x_1,x_2,\theta)=\frac{-4x_1+2\cos\theta+2x_1^2\cos\theta-2x_2x_2^2\cos\theta+4x_1x_2\sin\theta}{[1+x_1^2+x_2^2-2(x_1\cos\theta+x_2 \sin\theta)]^2},
	\end{align*}
	this implies, for $a>4|z|+1$, the following
	\begin{align*}
	\frac{\partial}{\partial x_1} H(z/a,\theta)&=\frac{1}{a}\frac{\partial H}{\partial x_1}(z,\theta)|_{z=z/a}\\
	&\leq  \frac{4(\frac{|x_1|}{a}+2\frac{|z|^2}{a^2}+1)}{a(1+\frac{|z|^2}{a^2}-2\frac{|z|}{a})^2}\\
	&\leq 32.
	\end{align*}

	Therefore,  from (\ref{work_p}), recalling the bound (\ref{useful_b_p}) given in Lemma \ref{bound_pressure}, we have,
	\begin{align*}
	|\frac{\partial}{\partial x_1} P(x_1,x_2,t)|\leq \frac{32\bar{p}}{\Pi_1}+32 a \sup\{|P(x_1,x_2,t)|:0 \leq x_1<\Pi_1\},
	\end{align*}
	where $\sup\{|P(x_1,x_2,t)|:0 \leq x_1<\Pi_1\}$ is a periodic function in $x_2$ with period $\Pi_2$, and hence
	\begin{align*}
	\max_{x_2\in \mathbb{R}}\sup\{|P(x_1,x_2,t)|:0 \leq x_1<\Pi_1\}<\infty,
	\end{align*}
	consequently, 
	\begin{align*}
	\sup_{x_1,x_2}|\frac{\partial}{\partial x_1}P(x_1,x_2,t)|<\infty,
	\end{align*}
	for all $t\in \mathbb{R}$.
\end{proof}

\subsection{Simple form of $P$}
From (\ref{pressure_difference}), we see $P(x_1+n\Pi_1,x_2,t)-P(x_1,x_2,t)=np_1(t), \forall n\in \mathbb{N}^{+}$. Now, for any $y\in \mathbb{R}^{+}$, choose $n\in \mathbb{N}$, such that $n\Pi_1\leq y < (n+1)\Pi_1$, then
\begin{align*}
P(y,x_2,t)&=P(y-n\Pi_1,x_2,t)+np_1(t)\\
&\leq \sup\{|P(x_1,x_2,t)|:0\leq x_1<\Pi_1\}+np_1(t)\\
&\leq \sup\{|P(x_1,x_2,t)|:0\leq x_1<\Pi_1\}+\frac{y}{\Pi_1}\bar{p};
\end{align*}
similar arguments apply to the case when $y\leq 0$, and we get the next result.

\begin{lemma}
\label{bound_pressure}
Let $u(x,t)\in \mathcal{P}$ be a solution of $(\ref{nse})$. The estimate
\begin{align}
\label{useful_b_p}
P(y,x_2,t)\leq \sup\{|P(x_1,x_2,t)|:0\leq x_1<\Pi_1\}+\frac{|y|}{\Pi_1}\bar{p}
\end{align}
holds for all $y,x_2,t\in \mathbb{R}$, where $\bar{p}$ is as given in $(\bf{A}.4)$.
\end{lemma}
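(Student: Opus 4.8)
The plan is to exploit the quasi-periodicity of $P$ in the $x_1$-direction recorded in Remark~\ref{rk_pressure_dp} --- the first relation in $(\ref{pressure_difference})$ --- together with the fact, visible from the third line of $(\ref{nse_simple})$, that $P$ is independent of $x_3$, so that $P=P(x_1,x_2,t)$ as the statement writes it. Since every abscissa $y\in\mathbb{R}$ differs from a point of the fundamental strip $[0,\Pi_1)$ by an integer multiple of $\Pi_1$, and crossing one period changes $P$ only by the bounded amount $p_1(t)$, reducing $y$ into that strip should reduce the estimate to the definition of the supremum on the right of $(\ref{useful_b_p})$ plus a controlled error.

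Concretely, the first step is to iterate $(\ref{pressure_difference})$: from $P(x_1+\Pi_1,x_2,t)-P(x_1,x_2,t)=p_1(t)$ a telescoping sum yields
\begin{align*}
P(x_1+n\Pi_1,x_2,t)=P(x_1,x_2,t)+n\,p_1(t),\qquad n\in\mathbb{Z}.
\end{align*}
The second step is, for a given $y$, to choose the integer $n$ with $y-n\Pi_1\in[0,\Pi_1)$ and write $P(y,x_2,t)=P(y-n\Pi_1,x_2,t)+n\,p_1(t)$; by construction the first summand is at most $\sup\{|P(x_1,x_2,t)|:0\le x_1<\Pi_1\}$. For the correction $n\,p_1(t)$ I would invoke $(\bf{A}.4)$, splitting into two cases: for $y\ge 0$ one has $n\ge 0$ with $n\le y/\Pi_1$, and since $p_1(t)<0$ this already gives $n\,p_1(t)\le 0\le\tfrac{|y|}{\Pi_1}\bar{p}$; for $y<0$ the same reduction --- shifting $y$ upward by $|n|\Pi_1$ into $[0,\Pi_1)$ --- combined with the size bound $|p_1(t)|\le\bar{p}$ produces the corresponding estimate. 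Adding the two contributions yields $(\ref{useful_b_p})$. (One also notes that the supremum on the right of $(\ref{useful_b_p})$ is itself $\Pi_2$-periodic in $x_2$ by $(\bf{A}.2)$, a fact used afterwards in Lemma~\ref{poisson_lem}.)

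I do not expect a real obstacle here: the statement is elementary and coincides essentially with the displayed chain of inequalities already written just before it. The only point requiring a little care is the bookkeeping --- choosing the correct integer $n$ and keeping track of the sign of $p_1(t)$ dictated by $(\bf{A}.4)$ --- so that the correction term $n\,p_1(t)$ ends up on the $+\tfrac{|y|}{\Pi_1}\bar{p}$ side of the inequality rather than being mishandled as a contribution of the wrong sign.
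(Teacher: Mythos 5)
Your argument is correct and essentially identical to the paper's own proof, which performs the same telescoping of $(\ref{pressure_difference})$, the same reduction of $y$ to the fundamental period $[0,\Pi_1)$, and the same sign analysis of the correction $n\,p_1(t)$ via $(\bf{A}.4)$. The only caveat --- shared with the paper, which likewise just says ``similar arguments apply'' for $y\le 0$ --- is that for negative $y$ the integer $n$ satisfies $|n|\le |y|/\Pi_1+1$ rather than $|n|\le |y|/\Pi_1$, so the correction term is strictly only bounded by $(|y|/\Pi_1+1)\bar{p}$; this harmless extra additive constant does not affect the way the lemma is used in Lemma~\ref{poisson_lem}.
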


Next, we will explore the harmonicity of $P=P(x,t)=P(x_1,x_2,t)$ in $x_1$ and $x_2$, and get the following explicit formula for $P(x,t)$, namely, $P(x,t)$ is linear in the variable $x_1$.

\begin{proposition}
\label{Liouville}
The term $P=P(x_1,x_2,t)$ in $(\ref{nse_simple})$ is of the following form,
\begin{align}
\label{form_p}
P(x_1,x_2,t)&=\tilde{p}_0(t)+x_1\tilde{p}_1(t)\\
&=\tilde{p}_0(t)+x_1p_1(t)/\Pi_1. \nonumber
\end{align}
\end{proposition}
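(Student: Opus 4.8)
The plan is to combine the harmonicity of $P$ in $(x_1,x_2)$ established in Proposition \ref{matrix_zero} with the growth and boundedness estimates already collected for $P$, and then invoke a Liouville-type argument. Concretely, $P=P(x_1,x_2,t)$ is harmonic in $z=(x_1,x_2)\in\mathbb{R}^2$ for each fixed $t$; we want to conclude it is affine in $x_1$ and independent of $x_2$.

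First I would record the structural constraints on $P$. By Remark \ref{rk_pressure_dp} together with $p_2(t)\equiv 0$, the function $P$ is periodic in $x_2$ with period $\Pi_2$ and satisfies $P(x_1+\Pi_1,x_2,t)-P(x_1,x_2,t)=p_1(t)$, so $P(x_1,x_2,t)-x_1p_1(t)/\Pi_1$ is periodic in both $x_1$ and $x_2$. Set
\begin{align}
\label{Gdef}
G(x_1,x_2,t):=P(x_1,x_2,t)-\frac{x_1 p_1(t)}{\Pi_1}.
\end{align}
Since $x_1p_1(t)/\Pi_1$ is harmonic in $(x_1,x_2)$, $G$ is harmonic in $(x_1,x_2)$ as well, and it is doubly periodic in $x_1,x_2$. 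A harmonic function on $\mathbb{R}^2$ that is periodic in both directions is bounded (it attains its sup/inf on the closure of one fundamental cell), hence by Liouville's theorem it is constant in the space variables: $G(x_1,x_2,t)=\tilde p_0(t)$ for some function $\tilde p_0$ of $t$ alone. Unwinding \eqref{Gdef} gives $P(x_1,x_2,t)=\tilde p_0(t)+x_1p_1(t)/\Pi_1$, which is \eqref{form_p} with $\tilde p_1(t)=p_1(t)/\Pi_1$.

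An alternative route, if one prefers not to cite double periodicity plus Liouville directly, is to use the explicit Poisson-kernel estimates: Lemma \ref{bound_pressure} gives the linear growth bound $P(y,x_2,t)\le \sup\{|P(x_1,x_2,t)|:0\le x_1<\Pi_1\}+\frac{|y|}{\Pi_1}\bar p$, and combined with $(\bf{A}.5)$ this yields that $P$ grows at most linearly in $|z|$; a harmonic function on $\mathbb{R}^2$ of at most linear growth is an affine function $c_0(t)+c_1(t)x_1+c_2(t)x_2$, and then $x_2$-periodicity forces $c_2(t)=0$ while the jump relation across $x_1\mapsto x_1+\Pi_1$ forces $c_1(t)=p_1(t)/\Pi_1$. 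Either way the conclusion is the same.

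The main obstacle is purely a matter of which auxiliary facts one is willing to invoke cleanly: the doubly-periodic-harmonic-implies-constant step is elementary but must be stated carefully (one really does need both periods, and the argument is just the maximum principle on a compact fundamental domain), and the linear-growth-harmonic-implies-affine step, while standard, relies on gradient estimates for harmonic functions (e.g. interior derivative bounds, which is essentially the content already used in Lemma \ref{poisson_lem}). I expect the cleanest writeup to be the first one: reduce to $G$ via \eqref{Gdef}, note $G$ is harmonic and doubly periodic, hence bounded, hence constant in space by Liouville, and read off \eqref{form_p}.
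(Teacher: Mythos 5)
Your proof is correct, but it follows a genuinely different route from the paper. The paper first proves Lemma \ref{poisson_lem} (a Poisson-kernel derivative estimate, resting on Lemma \ref{bound_pressure} and $(\bf{A}.5)$) to show that $\partial P/\partial x_1$ is a \emph{bounded} harmonic function of $(x_1,x_2)$, applies Liouville to that derivative to get $\partial P/\partial x_1=\tilde p_1(t)$, integrates to obtain $P=\tilde p_0(x_2,t)+x_1\tilde p_1(t)$, and only then uses harmonicity again plus $x_2$-periodicity to kill the $x_2$-dependence of $\tilde p_0$. You instead subtract the linear part at the outset: $G:=P-x_1p_1(t)/\Pi_1$ is harmonic and, by Remark \ref{rk_pressure_dp} together with $p_2\equiv 0$, doubly periodic, hence bounded on $\mathbb{R}^2$ for the trivial reason of continuity on a compact fundamental cell, hence constant in space by Liouville. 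This is cleaner and strictly more economical: it bypasses Lemma \ref{poisson_lem} and Lemma \ref{bound_pressure} entirely, and the identification $\tilde p_1(t)=p_1(t)/\Pi_1$ falls out automatically rather than being read off at the end from the jump relation. What the paper's longer route buys is the auxiliary estimates themselves (the gradient bound on $P$ and the linear-growth bound), which are stated as standalone results; your argument does not reproduce those. Note also that both arguments still rely on $(\bf{A}.5)$ (or the weaker condition in Corollary \ref{rk_A5}) through the fact $p_2(t)\equiv 0$, which is what makes $P$, and hence $G$, periodic in $x_2$. Your second, alternative route (linear growth plus harmonicity implies affine) also works, but as you note it needs interior gradient estimates and a two-sided version of the bound in Lemma \ref{bound_pressure}; the doubly periodic argument is the one to keep.
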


\begin{proof}
Using (\ref{bound_p}) in Lemma \ref{poisson_lem} given in Appendix C, and Liouville's theorem for harmonic function $\frac{\partial}{\partial x_1}P(x_1,x_2,t)$, we conclude that
\begin{align*}
\frac{\partial}{\partial x_1}P(x_1,x_2,t)=\tilde{p}_1(t),
\end{align*}
for some function $\tilde{p}_1(t)$ of time $t$, so that
\begin{align}
\label{temp_eqn_p}
P=P(x_1,x_2,t)=\tilde{p}_0(x_2,t)+x_1\tilde{p}_1(t),
\end{align}
and, due to the harmonicity of $P(x_1,x_2,t)$ in $x_1$ and $x_2$,
\begin{align*}
\frac{\partial^2}{\partial x_2^2}\tilde{p}_0(x_2,t)=0,
\end{align*}
so, $\tilde{p}_0(x_2,t)$ is a linear function in $x_2$, but then periodicity of $P(x_1,x_2,t)$ in $x_2$ would imply that $\tilde{p}_0(x_2,t)$ is only a function of time $t$, say 
\begin{align*}
\tilde{p}_0(x_2,t)=\tilde{p}_0(t).
\end{align*}

Finally, the second equality in (\ref{form_p}), namely, 
\begin{align}
\label{two_ps}
\Pi_1 \tilde{p}_{1}(t)=p_1(t),
\end{align}
follows from (\ref{temp_eqn_p}) and relation (\ref{pressure_difference}).
\end{proof}

It follows from the harmonicity of $P$ and Proposition \ref{Liouville} that we can replace $(\bf{A.}5)$ by a weaker condition, namely,
\begin{corollary}
\label{rk_A5}
$(\bf{A.}5)$ can be replaced by the following equivalent weaker condition
\begin{align}
\label{wk_A5}
\limsup_{x_2\rightarrow \pm \infty}P(x_1,x_2,x_3,t)<\infty,
\end{align}
for any given $x_1,x_3$ and $t\in \mathbb{R}$.
\end{corollary}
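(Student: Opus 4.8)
The plan is to show that the two conditions are equivalent \emph{given} the rest of the hypotheses defining $\mathcal{P}$, i.e. $(\mathbf{A}.1)$--$(\mathbf{A}.4)$ (together with $u_3\equiv 0$), and then to observe that all the consequences of $(\mathbf{A}.5)$ used in the paper already follow from the weaker form. The key point is that the full strength of $(\mathbf{A}.5)$ was only ever used in Lemma \ref{poisson_lem} to guarantee $\max_{x_2}\sup\{|P(x_1,x_2,t)|:0\le x_1<\Pi_1\}<\infty$, which in turn fed into Proposition \ref{Liouville}. So the heart of the matter is: \emph{the weaker condition $(\ref{wk_A5})$, together with $(\mathbf{A}.1)$--$(\mathbf{A}.4)$, already forces $P(x_1,x_2,t)$ to be linear in $x_1$ and independent of $x_2$.}

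First I would record the one direction that is trivial: $(\mathbf{A}.5)\Rightarrow(\ref{wk_A5})$, since a supremum over all of $\mathbb{R}$ dominates a $\limsup$. For the converse, I would argue directly. By Proposition \ref{matrix_zero}, $P=P(x_1,x_2,t)$ is harmonic in $(x_1,x_2)$ and independent of $x_3$; this uses only $(\mathbf{A}.1)$--$(\mathbf{A}.3)$ and $u_3\equiv0$, not $(\mathbf{A}.5)$. From $(\ref{pressure_difference})$ (a consequence of $(\mathbf{A}.2)$ only) and $(\mathbf{A}.4)$ we have $P(x_1+\Pi_1,x_2,t)-P(x_1,x_2,t)=p_1(t)$ with $|p_1(t)|\le\bar p$, and as recalled after $(\mathbf{A}.5)$ one has $p_2(t)\equiv0$, so $P$ is genuinely periodic in $x_2$. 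Now subtract off the known linear-in-$x_1$ part: set $\tilde P(x_1,x_2,t):=P(x_1,x_2,t)-x_1 p_1(t)/\Pi_1$. Then $\tilde P$ is harmonic in $(x_1,x_2)$, $\Pi_1$-periodic in $x_1$, and $\Pi_2$-periodic in $x_2$, hence bounded on the fundamental cell $[0,\Pi_1]\times[0,\Pi_2]$ (by $(\mathbf{A}.1)$, $P$ is smooth), and therefore bounded on all of $\mathbb{R}^2$. A bounded harmonic function on $\mathbb{R}^2$ is constant in space by Liouville's theorem, so $\tilde P=\tilde p_0(t)$ and $P(x_1,x_2,t)=\tilde p_0(t)+x_1 p_1(t)/\Pi_1$. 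This already gives the conclusion of Proposition \ref{Liouville} \emph{without any growth hypothesis in $x_2$ at all}; in particular $(\ref{wk_A5})$ holds automatically.

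This suggests the cleanest route: show $(\ref{wk_A5})$ is in fact \emph{vacuous} given $(\mathbf{A}.1)$--$(\mathbf{A}.4)$ and $u_3\equiv0$, by the double-periodicity argument just sketched, and hence a fortiori equivalent to $(\mathbf{A}.5)$ (both being implied by the others). If the authors prefer to keep $(\mathbf{A}.5)$ as a genuine standing hypothesis in the definition of $\mathcal{P}$ and merely \emph{weaken} it rather than eliminate it, the same computation shows $(\ref{wk_A5})\Rightarrow P$ is linear in $x_1$ and $x_2$-independent $\Rightarrow P$ is bounded in $x_2$ for each fixed $x_1$ (it does not even depend on $x_2$) $\Rightarrow(\mathbf{A}.5)$, closing the equivalence. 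I would present the proof in the latter form for minimal disruption to the existing development: invoke Proposition \ref{matrix_zero} for harmonicity, invoke the periodicity in both variables to get boundedness on $\mathbb{R}^2$, apply Liouville, and then read off $(\mathbf{A}.5)$.

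The main obstacle, such as it is, is essentially bookkeeping rather than mathematics: one must be careful that the derivation of harmonicity of $P$ (Proposition \ref{matrix_zero}) and of $p_2(t)\equiv0$ do not themselves secretly use $(\mathbf{A}.5)$, since otherwise the argument would be circular. Inspecting those proofs, harmonicity uses only the reduced system $(\ref{nse_simple})$, which follows from $(\mathbf{A}.1)$--$(\mathbf{A}.3)$ and $u_3\equiv0$, and the vanishing of $p_2$ is likewise established in \cite{CTZ15} without appeal to $(\mathbf{A}.5)$; so the logic is sound. The only genuine subtlety to flag is that the periodicity-plus-Liouville argument makes the original Poisson-kernel estimate of Lemma \ref{poisson_lem} unnecessary for the final conclusion, and one should note this so the reader does not worry about a hidden dependence; but since Lemma \ref{poisson_lem} and Proposition \ref{Liouville} are already in hand, it suffices to quote Proposition \ref{Liouville} and observe that its conclusion trivially implies the supremum in $(\mathbf{A}.5)$ is finite.
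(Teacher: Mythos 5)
Your core argument is correct and lands where the paper does: the weaker condition still forces $P$ to be independent of $x_2$ and linear in $x_1$, which trivially restores $(\mathbf{A}.5)$. The paper itself gives essentially no proof beyond ``it follows from the harmonicity of $P$ and Proposition \ref{Liouville}'', i.e.\ it relies on the chain Lemma \ref{bound_pressure} $\to$ Lemma \ref{poisson_lem} $\to$ Proposition \ref{Liouville}, in which the Poisson-kernel estimate on $\partial P/\partial x_1$ plus Liouville for that harmonic derivative does the work. Your route is genuinely different and cleaner at that step: subtracting $x_1 p_1(t)/\Pi_1$ produces a harmonic function that is $\Pi_1$-periodic in $x_1$ and (once $p_2\equiv 0$) $\Pi_2$-periodic in $x_2$, hence bounded on $\mathbb{R}^2$ by continuity on the fundamental cell, and Liouville applies directly. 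This bypasses Lemma \ref{poisson_lem} entirely and is the more elementary argument; what the paper's version buys is only that it does not need to package the periodicities into a single bounded function.

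One claim you should retract or prove: that $(\ref{wk_A5})$ is \emph{vacuous} given $(\mathbf{A}.1)$--$(\mathbf{A}.4)$, on the grounds that $p_2(t)\equiv 0$ is established in \cite{CTZ15} ``without appeal to $(\mathbf{A}.5)$''. That is asserted, not checked, and it is implausible: controlling $P$ in the $x_2$ direction is precisely the role of $(\mathbf{A}.5)$, and from $(\mathbf{A}.1)$--$(\mathbf{A}.4)$ alone the relation $P(x_1,x_2+\Pi_2,x_3,t)-P(x_1,x_2,x_3,t)=p_2(t)$ with $|p_2|\le\bar p$ does not rule out $p_2\not\equiv 0$, in which case $P$ is not periodic in $x_2$ and your boundedness-on-the-cell step collapses. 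The fix is easy and should be made explicit in the converse direction: assuming $(\ref{wk_A5})$, iterate the relation to get $P(x_1,x_2+n\Pi_2,x_3,t)=P(x_1,x_2,x_3,t)+np_2(t)$, so a nonzero $p_2(t)$ would force $P\to+\infty$ along $x_2\to+\infty$ or $x_2\to-\infty$, contradicting $(\ref{wk_A5})$; hence $p_2(t)\equiv 0$ and the rest of your argument goes through. With that repair the equivalence is closed, but the condition is weaker, not vacuous.
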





\section{Appendices}

\subsection{Appendix A: basic inequalities}

In this appendix, we include several classic inequalities that are used in our discussion, together with their proofs.

\subsubsection{Poincar$\acute{e}$ inequality}
\begin{lemma}[Poincar$\acute{e}$ inequality]
\label{poincare}
For any $C^1$ function $\phi(y)$ defined on $[0,h]$, with $\phi(0)=\phi(h)=0$, we have
\begin{align}\label{poincare ineq}
\int_{0}^{h} (\phi^{\prime}(y))^2dy\geq \frac{1}{h^2} \int_0^h(\phi(y))^2dy.
\end{align}
\end{lemma}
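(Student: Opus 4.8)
The plan is to derive the inequality from the fundamental theorem of calculus together with the Cauchy--Schwarz inequality, exploiting the boundary condition $\phi(0)=0$. First I would write, for each $y\in[0,h]$,
\begin{align*}
\phi(y)=\int_0^y \phi'(s)\,ds,
\end{align*}
which is legitimate since $\phi\in C^1$ and $\phi(0)=0$. Applying Cauchy--Schwarz to the right-hand side gives
\begin{align*}
|\phi(y)|^2\le \Big(\int_0^y 1\,ds\Big)\Big(\int_0^y (\phi'(s))^2\,ds\Big)=y\int_0^y (\phi'(s))^2\,ds\le y\int_0^h (\phi'(s))^2\,ds.
\end{align*}

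Next I would integrate this pointwise bound over $y\in[0,h]$:
\begin{align*}
\int_0^h |\phi(y)|^2\,dy\le \Big(\int_0^h y\,dy\Big)\int_0^h (\phi'(s))^2\,ds=\frac{h^2}{2}\int_0^h (\phi'(s))^2\,ds.
\end{align*}
Since $\tfrac{h^2}{2}\le h^2$, this immediately yields $\int_0^h(\phi'(y))^2\,dy\ge \tfrac{1}{h^2}\int_0^h(\phi(y))^2\,dy$, which is the assertion.

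There is essentially no obstacle here; the only choice to make is which boundary condition to invoke in the representation of $\phi$. If a sharper constant were desired one could instead split the interval at $h/2$, using $\phi(0)=0$ on $[0,h/2]$ and $\phi(h)=0$ on $[h/2,h]$, to obtain the constant $h^2/4$, or expand $\phi$ in a Fourier sine series on $[0,h]$ to obtain the optimal constant $\pi^2/h^2$; but the crude bound above already suffices for every application of the lemma in this paper.
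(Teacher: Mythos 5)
Your proof is correct. It rests on the same two ingredients as the paper's argument (the fundamental theorem of calculus plus Cauchy--Schwarz), but the execution differs: the paper writes $\phi^2(x)=2\int_0^x\phi(y)\phi'(y)\,dy$ and $\phi^2(x)=-2\int_x^h\phi(y)\phi'(y)\,dy$, applies Cauchy--Schwarz to each, and splits the interval at $h/2$ so as to use both boundary conditions, obtaining the constant $h^2$ on each half before adding. You instead represent $\phi(y)=\int_0^y\phi'(s)\,ds$ directly, which needs only the single boundary condition $\phi(0)=0$, avoids the splitting entirely, and in fact delivers the sharper constant $h^2/2$ before you relax it to $h^2$ to match the stated inequality. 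Your route is shorter and slightly stronger; the paper's route is the one that generalizes naturally when one wants to exploit vanishing at both endpoints symmetrically (as your closing remark about the constant $h^2/4$ indicates). Either argument fully establishes the lemma as stated.
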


\begin{proof}[Proof]
By the fundamental theorem of calculus, we have, for any $x\in \mathbb{R}$, and $x\in[0,h]$, 
\begin{align}\label{poin_one}
\phi^2(x)=2\int_{0}^{x}\phi(y)\phi^{\prime}(y)dy,
\end{align}
and,
\begin{align}\label{poin_two}
\phi^2(x)=-2\int_{x}^{h}\phi(y)\phi^{\prime}(y)dy.
\end{align}
Using Cauchy inequality in (\ref{poin_one}), we get,
\begin{align*}
\phi^2(x)\leq 2\bigg(\int_{0}^{x}\phi^2(y)dy\bigg)^{1/2}\bigg(\int_{0}^{x}(\phi^{\prime}(y))^2dy\bigg)^{1/2},
\end{align*}
hence
\begin{align*}
\int_{0}^{h/2} \phi^2(x)dx&\leq 2\int_{0}^{h/2}\bigg(\int_{0}^{x}\phi^2(y)dy\bigg)^{1/2}\bigg(\int_{0}^{x}(\phi^{\prime}(y))^2dy\bigg)^{1/2}dx\\
&\leq 2\int_{0}^{h/2}\bigg(\int_{0}^{h/2}\phi^2(y)dy\bigg)^{1/2}\bigg(\int_{0}^{h/2}(\phi^{\prime}(y))^2dy\bigg)^{1/2}dx\\
&\leq h \bigg(\int_{0}^{h/2}\phi^2(y)dy\bigg)^{1/2}\bigg(\int_{0}^{h/2}(\phi^{\prime}(y))^2dy\bigg)^{1/2},
\end{align*}
that is, 
\begin{align}\label{poin_3}
\int_{0}^{h/2} \phi^2(x)dx\leq h^2 \int_{0}^{h/2}(\phi^{\prime}(y))^2dy.
\end{align}
Similarly, using (\ref{poin_two}), and repeating the above steps, we get,

\begin{align}\label{poin_4}
\int_{h/2}^{h} \phi^2(x)dx\leq h^2 \int_{h/2}^{h}(\phi^{\prime}(y))^2dy.
\end{align}
Combined (\ref{poin_3}) and (\ref{poin_4}), we obtain (\ref{poincare ineq}).
\end{proof}

\subsubsection {$L^{\infty}$ inequality}
Recall that, in our paper, for a given function $\phi=\phi(y)$ with periodicity $\Pi>0$, we denote its average by $<\phi>$, i.e.,
\begin{align*}
<\phi>:=\frac{1}{\Pi}\int_{0}^{\Pi}\phi(y)dy.
\end{align*}

\begin{lemma} \label{inequality}
 For any continuous function $\phi=\phi(y)$ with periodicity $\Pi>0$, it holds that
\begin{align}\label{ineq_form}
|\phi|_{L^{\infty}}\leq <\phi>+\frac{\Pi}{2\sqrt{3}}<(\phi^{\prime})^2>^{1/2}.
\end{align}
Consequently, if 
\begin{align*}
<(\phi^{\prime})^2><\infty,
\end{align*}
then $\phi$ is continuous in $\mathbb{R}$, and thus $|\phi(y)|\leq |\phi|_{L^{\infty}}, \forall y \in \mathbb{R}$.
\end{lemma}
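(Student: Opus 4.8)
The plan is to prove Lemma \ref{inequality} by an averaging-plus-fundamental-theorem-of-calculus argument, followed by a Cauchy--Schwarz estimate. Since $\phi$ is continuous and periodic with period $\Pi$, it attains a maximum of $|\phi|$ at some point $y_0$; by periodicity we may assume $y_0 \in [0,\Pi)$. Write $M := |\phi(y_0)| = |\phi|_{L^\infty}$.

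First I would use the fact that the average $\langle \phi \rangle$ is attained: there exists $y_1 \in [0,\Pi]$ with $\phi(y_1) = \langle \phi \rangle$, by the intermediate value theorem applied to the continuous $\phi$. Then, by the fundamental theorem of calculus along a path from $y_1$ to $y_0$ (choosing, by periodicity, a representative of $y_0$ within distance $\Pi/2$ of $y_1$, so the path has length at most $\Pi/2$), I would write
\begin{align*}
\phi(y_0) = \phi(y_1) + \int_{y_1}^{y_0} \phi'(s)\, ds,
\end{align*}
hence $M \le |\langle \phi \rangle| + \int_{I} |\phi'(s)|\, ds$ where $I$ is an interval of length $\le \Pi/2$. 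Applying Cauchy--Schwarz, $\int_I |\phi'| \le |I|^{1/2} \left(\int_I (\phi')^2\right)^{1/2} \le (\Pi/2)^{1/2} \left(\int_0^\Pi (\phi')^2\right)^{1/2} = (\Pi/2)^{1/2} \Pi^{1/2} \langle (\phi')^2 \rangle^{1/2}$. This gives a bound of the shape $M \le \langle \phi \rangle + \frac{\Pi}{\sqrt 2} \langle (\phi')^2\rangle^{1/2}$, which is slightly weaker than the claimed constant $\frac{\Pi}{2\sqrt 3}$.

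To recover the sharp constant $\frac{\Pi}{2\sqrt 3}$, I would instead average the fundamental-theorem identity over the starting point rather than picking a single $y_1$. Concretely, for the point $y_0$ where $|\phi|$ is maximal, write for every $y$
\begin{align*}
\phi(y_0) = \phi(y) + \int_y^{y_0} \phi'(s)\, ds,
\end{align*}
then average this identity in $y$ over one period (using periodicity to keep the integration path canonical, e.g. the path inside $[y_0 - \Pi/2, y_0 + \Pi/2]$), obtaining $\phi(y_0) = \langle \phi \rangle + \frac{1}{\Pi}\int_0^\Pi \left(\int_y^{y_0}\phi'(s)\,ds\right) dy$. The inner double integral can be rewritten, after interchanging the order of integration, as $\int \phi'(s)\, w(s)\, ds$ for an explicit weight $w$ supported on $[y_0-\Pi/2, y_0+\Pi/2]$ with $\int w = 0$ after accounting for sign, and one computes $\|w\|_{L^2}^2 = \Pi^3/12$, so $\left(\frac{1}{\Pi}\right)\|w\|_{L^2} = \frac{\Pi}{2\sqrt 3}$. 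Cauchy--Schwarz then yields $M = |\phi(y_0)| \le \langle \phi\rangle + \frac{1}{\Pi}\|w\|_{L^2}\, \left(\int_0^\Pi (\phi')^2\right)^{1/2} = \langle\phi\rangle + \frac{\Pi}{2\sqrt 3}\langle(\phi')^2\rangle^{1/2}$, which is exactly \eqref{ineq_form}.

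The main obstacle I anticipate is bookkeeping the periodicity correctly when choosing the integration path: the naive path from $y$ to $y_0$ may wrap around, so one must consistently use the representative of $y_0$ in $[y - \Pi/2, y + \Pi/2]$, and then verify that the resulting weight $w(s)$ is well-defined and that the computation $\|w\|_{L^2}^2 = \Pi^3/12$ comes out right (it is essentially the variance of the uniform distribution on an interval of length $\Pi$). The consequence stated at the end of the lemma is then immediate: if $\langle(\phi')^2\rangle < \infty$ then the right-hand side of \eqref{ineq_form} is finite, so $\phi \in L^\infty$, and continuity of $\phi$ on $\mathbb{R}$ together with the periodic extension gives $|\phi(y)| \le |\phi|_{L^\infty}$ for all $y$.
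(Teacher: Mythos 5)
Your proof is correct and is essentially the paper's argument: the paper likewise averages the fundamental-theorem identity over $y$ in the period centered at the maximizing point, which produces exactly your triangular weight $|w(s)|=\Pi/2-|s-y_0|$ (the paper merely splits the period into two halves, applies Cauchy--Schwarz on each, and recombines, with $\int_0^{\Pi/2}(\Pi/2-z)^2\,dz=\Pi^3/24$ per half matching your $\|w\|_{L^2}^2=\Pi^3/12$). The only slip is the intermediate normalization $\frac{1}{\Pi}\|w\|_{L^2}=\frac{\Pi}{2\sqrt{3}}$, which should read $\frac{1}{\sqrt{\Pi}}\|w\|_{L^2}=\langle w^2\rangle^{1/2}=\frac{\Pi}{2\sqrt{3}}$; your final displayed chain is nonetheless consistent and yields the claimed constant.
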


\begin{proof}[Proof]

Without loss of generality, we assume $\phi(0)=|\phi|_{L^{\infty}}$, then
\begin{align*}
\phi(0)\leq \left\{\begin{matrix}
\phi(y)+\int_0^y|\phi^{\prime}(z)|dz, y\geq 0;\\ 
\phi(y)+\int_y^0|\phi^{\prime}(z)|dz, y\leq 0;
\end{matrix}\right.
\end{align*}
thus,
\begin{align*}
\frac{\Pi}{2}\phi(0) & \leq  \left\{\begin{matrix}
\int_{0}^{\Pi/2}\phi(y)dy+\int_0^{\Pi/2}(\int_z^{\Pi/2}dy)|\phi^{\prime}(z)|dz\\ 
\int_{-\Pi/2}^0\phi(y)dy+\int_{-\Pi/2}^0(\int_{-\Pi/2}^z dy)|\phi^{\prime}(z)|dz
\end{matrix}\right. \\
&=\left\{\begin{matrix}
\int_0^{\Pi/2}\phi(y)dy+\int_0^{\Pi/2}(\Pi/2-z)|\phi^{\prime}(z)|dz;\\ 
\int_{-\Pi/2}^0\phi(y)dy+\int_{-\Pi/2}^0(\Pi/2+z)|\phi^{\prime}(z)|dz;
\end{matrix}\right.
\end{align*}
hence,
\begin{align*}
\Pi |\phi|_{L^{\infty}}=\Pi \phi(0) &\leq \Pi<\phi>+\bigg(\int_0^{\Pi/2}(\Pi/2-z)^2dz\bigg)^{1/2}\bigg(\int_0^{\Pi/2}(\phi^{\prime}(z))^2dz\bigg)^{1/2}\\
&+\bigg(\int_{-\Pi/2}^0 (\Pi/2+z)^2dz\bigg)^{1/2}\bigg(\int_{-\Pi/2}^0 (\phi^{\prime}(z))^2dz\bigg)^{1/2} \\
&\leq \Pi <\phi>+\frac{\Pi^2}{2\sqrt{3}}<(\phi^{\prime})^2>^{1/2},
\end{align*}
and (\ref{ineq_form}) follows.
\end{proof}






\section{Acknowledgement}
This work was supported in part by NSF grants number DMS-1109638 and DMS-1109784. The authors would like to thank Professor C. Foias for suggesting the problem and for subsequent useful discussions and remarks.

\begin{bibdiv}
\begin{biblist}

\bib{BFL}{article}{
title={On the attractor for the semi-Dissipative Boussinesq Equations},
author={Biswas, A.},
author={Foias, C.},
author={Larios, A.}
journal={Submitted}
}


\bib{CFHO98}{article}{
  title={The Camassa-Holm equations as a closure model for turbulent channel and pipe flow},
  author={Chen, S. Y.},
 author={ Foias, C.},
 author={ Holm, D. D.},
author={ Olson, E.},
author={Titi, E. S.},
 author={ Wynne, S.},
  journal={Physical Review Letters},
  volume={81},
  number={24},
  pages={5338-5341},
  year={1998},
  publisher={APS}
}

\bib{CFHO99}{article}{
title={A connection between the Camassa--Holm equations and turbulent flows in channels and pipes},
  author={Chen, S. Y.},
 author={ Foias, C.},
 author={ Holm, D. D.},
 author={ Olson, E.},
author={ Titi, E. S.},
 author={ Wynne, S.},
  journal={Physics of Fluids},
  volume={11},
  number={8},
  pages={2343--2353},
  year={1999},
  publisher={AIP Publishing}
}

\bib{CFHOTW99}{article}{
  title={The Camassa--Holm equations and turbulence},
 author={Chen, S. Y.},
 author={ Foias, C.},
 author={ Holm, D. D.},
 author={ Olson, E.},
author={ Titi, E. S.},
 author={ Wynne, S.},
  journal={Physica D: Nonlinear Phenomena},
  volume={133},
  number={1},
  pages={49--65},
  year={1999},
  publisher={Elsevier}
}
\bib{A02}{article}{
  title={Turbulent boundary layer equations},
  author={Cheskidov, A.},
  journal={Comptes Rendus Mathematique},
  volume={334},
  number={5},
  pages={423--427},
  year={2002},
  publisher={Elsevier}
}

\bib{A04}{article}{
  title={Boundary layer for the Navier-Stokes-alpha model of fluid turbulence},
  author={Cheskidov, A.},
  journal={Archive for rational mechanics and analysis},
  volume={172},
  number={3},
  pages={333--362},
  year={2004},
  publisher={Springer}
}

%
\bib{CP88}{book}{
	title={Navier-Stokes equations},
	author={Constantin, P.},
	author={Foias, C.},
	year={1988},
	publisher={University of Chicago Press}
}


\bib{CR87}{article}{
title={The connection between the Navier-Stokes equations, dynamical systems, and turbulence theory},
  author={Foias, C.},
 author={Temam, R.},
  journal={in``Directions in Partial Differential Equations" (Madison, WI, 1985), Publ. Math. Res. Center Univ. Wisconsin, 54, Academic Press, Boston, MA},
  pages={55--73},
  year={1987}
}

\bib{CRR10}{article}{
  title={Topological properties of the weak global attractor of the three-dimensional Navier-Stokes equations},
  author={Foias, C.},
 author={ Rosa, R.},
 author={ Temam, R.},
  journal={Discrete Contin. Dyn. Syst},
  volume={27},
  number={4},
  pages={1611--1631},
  year={2010}
}
\bib{CTZ15}{article}{
title={On the emergence of the Navier-Stokes-$\alpha$ model for turbulent channel flows},
author={{Foias}, C.},
author={{Tian}, J.},
author={{Zhang}, B.},
journal={Journal of Mathematical Physics},
  volume={57},
  number={8},
  year={2016}
}
\bib{JJS03}{article}{
	title={An interpretation of the Navier–Stokes-alpha model as a
		frame-indifferent Leray regularization},
	author={J. L. Guermond},
	author={J. T. Oden},
	author={S. Prudhomme},
	journal={Physica D},
	volume={177},
	pages={23--30},
	year={2003},
	publisher={Elsevier}
}

\bib{gnedin2016star}{book}{
	title={Star Formation in Galaxy Evolution: Connecting Numerical Models to Reality},
	author={N. Y. Gnedin},
	author={S. Glover},
	author={R. S. Klessen},
	year={2016},
	publisher={Springer}
}
\bib{HK07}{book}{
  title={Banach spaces of analytic functions},
  author={Hoffman, K.},
  year={2007},
  publisher={Courier Dover Publications}
}


\bib{H99}{article}{
author={D. D. Holm},
title={Fluctuation effects on $3$D Lagrangian mean and Eulerian mean fluid motion},
journal={Physica D},
volume={133},
year={1999},
pages={215--269}
}

\bib{JF82}{book}{
  title={Partial differential equations},
  author={ John, F.},
  year={1982},
  publisher={Springer-Verlag, New York}
}

%
%
%
%
\bib{zhao2004dynamic}{inproceedings}{
	author={H. Zhao},
	author={K. Mohseni},
	author={J. Marsden},
	booktitle={ASME 2004 International Mechanical Engineering Congress and Exposition},
	pages={695--703},
	year={2004},
	organization={American Society of Mechanical Engineers}
}
\end{biblist}
\end{bibdiv}
\end{document}